\newtheorem{thm}{Theorem}[section]  
\newtheorem{cor}[thm]{Corollary}
\newtheorem{lem}[thm]{Lemma}
\newtheorem{prop}[thm]{Proposition} 
\newtheorem{df-pr}[thm]{Definition-Proposition}
\theoremstyle{definition}
\newtheorem{defn}[thm]{Definition} 
\newtheorem{rem}[thm]{Remark}
\newcommand{\Sch}{{\operatorname{Sch}_{\bfk}}}
\newcommand{\ZZ}{{\mathbb Z}}
\newcommand{\PP}{{\mathbb P}}
\newcommand{\LL}{{\mathbb L}}
\newcommand{\sfn }{{\mathsf n}}
\newcommand{\sfp }{{\mathsf p}}
\newcommand{\sfs }{{\mathsf s}}
\newcommand{\bfk}{{\mathbf k }}
\newcommand{\calL}{{\mathcal L}}
\newcommand{\calO}{{\mathcal O}}
\newcommand{\calP}{{\mathcal P}}
\newcommand{\calQ}{{\mathcal Q}}
\newcommand{\calS}{{\mathcal S}}
\newcommand{\scB}{{\mathscr B}}
\newcommand{\tscB}{\widetilde{\mathscr B}}
\newcommand{\scC}{{\mathscr C}}
\newcommand{\tscC}{\widetilde{\mathscr C}}
\newcommand{\scP}{{\mathscr P}}
\newcommand{\scS}{{\mathscr S}}
\newcommand{\tscS}{\widetilde{\mathscr S}}
\newcommand{\lan}{{\langle}}
\newcommand{\ran}{{\rangle}}
\newcommand{\inc}{\hookrightarrow}
\newcommand{\Fl}{\operatorname{Fl}}
\newcommand{\Gr}{\operatorname{Gr}}
\newcommand{\gr}{\operatorname{gr}}
\newcommand{\Spec}{\operatorname{Spec}}
\newcommand{\rk}{{\operatorname{rk}}}
\newcommand{\id}{{\operatorname{id}}}
\newcommand{\supp}{{\operatorname{supp}}}
\newcommand{\OG}{{\operatorname{OG}}}
\newcommand{\SP}{{\calS\calP}}
\newcommand{\SM}{\mathbf{Sm}_{\bfk}}
\newcommand{\SCH}{\mathbf{Sch}_{\bfk}}
\newcommand{\LCI}{\mathbf{Lci}_{\bfk}}
\newcommand{\CK}{{\it CK}}
\newcommand{\SG}{{\it SG}}
\newcommand{\srarrow}{\twoheadrightarrow}
\newcommand{\irarrow}{\hookrightarrow}
\newcommand{\Laz}{\mathbb{L}}
\newcommand{\trecd}{\cdot\cdot\cdot}
\newcommand{\spec}{{\rm Spec\,}}
\newcommand{\bfs}{{\mathbf s }}
\newsavebox{\savepar}
\numberwithin{equation}{section}
\newcounter{labelflag} \setcounter{labelflag}{0}
\newcommand{\labelon}{\setcounter{labelflag}{1}}
\newcommand{\Label}[1]{\ifnum\thelabelflag=1\ifmmode
\makebox[0in][l]{\qquad\fbox{\rm#1}} \else
\marginpar{\vspace{0.7\baselineskip} \hspace{-1.1\textwidth}
\fbox{\rm#1}} \fi \fi \label{#1} } \labelon
\begin{document} 
\title{Symplectic and odd orthogonal Pfaffian formulas for algebraic cobordism}
\author{Thomas Hudson, Tomoo Matsumura}
\date{}
\maketitle 
\begin{abstract} 
In this paper, we prove generalizations of Pfaffian formulas for the degeneracy loci classes in the algebraic cobordism of the symplectic/odd orthogonal Grassmann bundles.
\end{abstract}

\section{Introduction}
The $r$-th \textit{degeneracy locus} of a morphism of vector bundles $\varphi: E\rightarrow F$ is the subvariety $D_r(\varphi)$ of the base scheme $X$ formed by all the points $x$ over which the image of  the restriction $\varphi(x)$ has dimension at most $r$. If, for simplicity, one assumes $X$ to be a smooth quasi-projective scheme and $\varphi$ to be sufficiently general, the classical Giambelli--Thom--Porteous formula  describes the Chow ring fundamental class $[D_r(\varphi)]_{CH}$ as a determinant in Chern classes. Similarly, it is possible to consider more restrictive settings in which $\varphi$ is either \textit{skewsymmetric} or \textit{symmetric}, which means that one has $F=E^\vee$ and $\varphi^\vee=-\varphi$ (respectively $\varphi=\varphi^\vee$). In both cases $[D_r(\varphi)]_{CH}$ is given by a Pfaffian formula in the Chern classes of $E$.

 A more general family of degeneracy loci can be constructed by replacing $E$ and $F$ with flags of vector bundles $E_{p_1}\subseteq\trecd \subseteq E_{p_l}=E$ and $F=F_{q_m}\srarrow\trecd \srarrow F_{q_1}$, by considering the morphisms $\varphi_{i,j}:E_i\rightarrow F_j$ induced by $\varphi: E\rightarrow F$ and taking the intersection of the loci $D_{r_{i,j}}(\varphi_{i,j})$. Fundamental examples of these loci are represented by the Schubert varieties of Grassmannians or, more generally, of partial flag varieties. Just as before, it is possible to take into consideration ``skewsymmetric'' and ``symmetric'' variants. Instead of subspaces of a fixed dimension inside the given vector space, the symplectic Grassmannian  (respectively orthogonal Grassmannian) consists of subspaces which are \textit{isotropic}, \textit{i.e.} on which a given symplectic (respectively symmetric bilinear) form vanishes identically. Inside of these ambient spaces one can construct Schubert varieties which are indexed by the combinatorial objects known as $k$-\textit{strict} partitions.
 
 In \cite{PragaczPQ} Pragacz considered the case in which the dimension of the isotropic subspaces is maximal and showed that all Schubert classes, the fundamental classes of Schubert varieties, can be expressed through a Pfaffian formula. Later, in \cite{Kazarian}, Kazarian generalised Pragacz's formula to the case of isotropic Grassmann bundles in which the vector space is replaced by a vector bundle $E\rightarrow X$. An alternative proof for maximal symplectic Grassmann bundles, also known as Lagrangian, was given by Ikeda \cite{Ikeda} by means of equivariant cohomology. The formulas for the non-maximal Grassmannians are instead due to Buch--Kresh--Tamvakis \cite{BuchKreschTamvakis1} and in \cite{WilsonThesis} Wilson made a conjecture for the explicit formula describing the Schubert classes of non-maximal symplectic Grassmann bundles, which was later proved by Ikeda and the second author \cite{IkedaMatsumura}. Alternative proofs of the conjecture were given by Tamvakis--Wilson \cite{TamvakisWilson} and Anderson--Fulton \cite{AndersonFulton2}.

 In recent years, following the trend of generalised Schubert calculus, there has been an attempt to lift classical results in $CH^*$ to other functors like connective $K$-theory and algebraic cobordism, respectively denoted $CK^*$ and $\Omega^*$. In \cite{HIMN}, together with T. Ikeda and H. Naruse, we established Pfaffian formulas describing the $K$-theoretic Schubert classes of symplectic and odd orthogonal Grassmann bundles which generalise those that have already been mentioned. The goal of this paper is to further extend these formulas to $\Omega^*$.
 
A key aspect of algebraic cobordism, which was established by Levine--Morel in \cite{LevineMorel}, is its universality. In particular, this means that formulas which hold for $\Omega^*$ can be specialised to any other oriented cohomology theory in exactly the same way in which expressions in $CK^*$ recover the classical ones. An easy example of this phenomenon is illustrated by the behaviour of the first Chern class of line bundles. In $CH^*$ one has 

 \vspace{-4 mm}

$$c_1(L\otimes M)=c_1(L)+c_1(M)\ \text{ and\  }\ c_1(L^\vee)=-c_1(L),$$

 while in $CK^*$ these equalities become 
 
 \vspace{-4.5 mm}
 
$$c_1(L\otimes M)=c_1(L)+c_1(M)-\beta c_1(L)c_1(M) \ \text{ and }\ c_1(L^\vee)=\frac{-c_1(L)}{1-\beta c_1(L)},$$

 with $\beta\in CK^*(\spec \bfk)$ being the pushforward of the fundamental class of $\PP^1$ to the point. In order to describe $c_1(L\otimes M)$ and $c_1(L^\vee)$ in a general theory $A^*$, it becomes necessary to introduce $F_A$ and $\chi_A$, two power series in respectively two and one variables which are known as \textit{formal group law} and \textit{formal inverse}.
 
 Let us begin by explaining our results in the case of a symplectic Grassmann bundle $\SG^k (E)$, where $E\rightarrow X$ is a vector bundle of rank $2n$ with a nowhere vanishing skewsymmetric form and for every $x\in X$ the fiber $\SG^k (E)_x$ is the symplectic Grassmannian of $(n-k)$-dimensional isotropic subspaces of $E_x$. Exactly as Kazarian did in \cite{Kazarian}, in \cite{HIMN} for every $k$-strict partition $\lambda$ the $K$-theoretic fundamental class of the Schubert variety $X_\lambda^C$ was obtained by computing $\psi_*[Y_\lambda^C]_{CK}$, the pushforward of the fundamental class of a resolution of singularities $\psi:Y_\lambda^C\rightarrow X_\lambda^C\irarrow \SG^k E$. The final formula was expressed in terms of the relative Segre classes $\scS_i^{CK}\big((U-E/F^j)^\vee\big)$, some characteristic classes associated to the tautological isotropic subbundle $U$ and to the elements of the given reference flag of subbundles of $E$ used to define the Schubert varieties
 $$0=F^n\subset F^{n-1} \subset \cdots \subset F^1 \subset F^0 \subset F^{-1} \subset \cdots \subset F^{-n}=E.$$
 
 Since in a general oriented cohomology theory not all Schubert varieties have a well defined notion of fundamental class, we opted for $\psi_*[Y_\lambda^C]_A$ as a replacement and, by combining Kazarian's approach with the generalised relative Segre classes $\scS^A$ introduced in \cite{HudsonMatsumura}, we obtained the following description.     

\vspace{2 mm}

\noindent{\bf Main Theorem }(Theorem \ref{detthmC}). {\it 
 Let $A^*$ be any oriented cohomology theory and for each $m\in \ZZ$ and $-n\leq \ell \leq n$ set $\scC_m^{(\ell)} := \scS_{m}^A\big(U^\vee-(E/F^{\ell})^{\vee})\big)$. Then, for every $k$-strict partition $(\lambda_1,\trecd,\lambda_r)$,  we have the following equality in $A^*(\SG^k(E))$:
\begin{equation}\label{Theorem I}
[Y_\lambda^C \to \SG^k(E)]_A:=\psi_*[Y_\lambda^C]_A= \sum_{\sfs=(s_1,\dots, s_r) \in\ZZ^r}c_{\sfs}^{\lambda}\scC_{\lambda_1+s_1}^{(\chi_1)} \cdots \scC_{\lambda_r+s_r}^{(\chi_r)}.
\end{equation}
Here $\chi=(\chi_1,\chi_2,\trecd, \chi_r)$ is the \text{characteristic index} defined in Section \ref{seckstrict} and the coefficients $c_{\sfs}^\lambda\in A^*(\spec \bfk)$ are given by the Laurent series expansion
\begin{equation*}
\frac{\prod_{1\leq i<j\leq r}(1-t_i/t_j)P_A(t_j,t_i)}{\prod_{(i,j)\in C(\lambda)}(1-\bar t_i/t_j)P_A(t_j, \bar t_i)}=\sum_{\sfs=(s_1,\dots, s_r) \in\ZZ^r}c_{\sfs}^{\lambda}\cdot t_1^{s_1}\cdots t_r^{s_r},
\end{equation*}
where $C(\lambda):=\{(i,j) \ |\ 1\leq  i < j, \ \ \chi_i + \chi_j \geq 0\}$, $\bar t_i=\chi_A(t_i)$ and $P_A$ is the unique power series satisfying $F_A(u,\chi_A(v))=(u-v)P_A(u,v)$.
}

\vspace{3 mm}
It is worth pointing out that when $\lambda=(\lambda_1)$ one simply has $[Y^C_{\lambda_1}\rightarrow \SG^k(E)]_A=\scC^{(\chi_1)}_{\lambda_1}$, since the Laurent series defining the coefficients reduces to 1. In particular this means that it consists of a single relative Segre class. This is in general not the case when one considers an odd orthogonal Grassmann bundle $\OG^k (E)$, with $E$ of rank $2n+1$ and each fiber being an orthogonal Grassmannian of $(n-k)$-dimensional isotropic subspaces. The essential difference with the previous situation is that it is far more complex to deal with the case of quadric bundles $\OG^{n-1}(E)=Q(E)$, the orthogonal analogue of projective bundles. In fact, if we set the reference flag to be 
$$0=F^n\subset F^{n-1} \subset \cdots \subset F^1 \subset F^0 \subset (F^0)^{\perp} \subset F^{-1} \subset \cdots \subset F^{-n}=E,
$$
then, as elements of $A^*(Q(E))\otimes_\ZZ \ZZ[1/2]$, the Schubert classes are given by
\[
[X^B_{\lambda_1} \to Q(E)]_A =\scB_{\lambda_1}^{(\chi_1)}:= \begin{cases}
\scS^A_{\lambda_1}\big(U^\vee- (E/F^{\chi_1})^{\vee}\big) & (0\leq \lambda_1 < n)\\
 \displaystyle\frac{1}{F_A^{(2)}\big(c_1(U^\vee)\big)} \scS^A_{\lambda_1}\big(U^\vee- (E/F^{\chi_1})^{\vee}\big) & (n\leq  \lambda_1 < 2n),
\end{cases}
\]
where $F_A^{(2)}(u)$  is the power series defined by the equation $F^A(u,u) = u \cdot F_A^{(2)}(u)$. More generally, the pushforward classes $[Y_\lambda^B\rightarrow \OG^k(E)]_A\in A^*(\OG^k(E))\otimes_\ZZ \ZZ[1/2]$ associated to the Schubert varieties $X_\lambda^B$ are obtained from (\ref{Theorem I}) by replacing $\scC_m^{(i)}$ with $\scB_m^{(i)}$ (see Theorem \ref{MainThmB}).

Recently, in \cite{K-Anderson}, Anderson extended the results of \cite{HIMN} to more general degeneracy loci including those arising from  even orthogonal Grassmann bundles. His work is based on the approach he and Fulton employed in their study of the Chow ring fundamental classes of degeneracy loci for all types \cite{AndersonFulton,AndersonFulton2}. In our future work we would like to lift Anderson's results to $\Omega^*$ so to cover also the even orthogonal case.  

The organisation of the paper is as follows. In section 2 we recall some basic facts about Borel--Moore homology theories, the covariant analogue of oriented cohomology theories, and we translate into this setting the results on Segre classes presented in \cite{HudsonMatsumura}. In section 3 we prove the main theorem for symplectic Grassmann bundles, while in section 4 we first deal with the special case of quadric bundles, which we then use to establish the main theorem for odd orthogonal Grassmann bundles.

\paragraph*{Notations and conventions:} Throughout this paper $\bfk$ will be a field of characteristic 0. By $\SCH$ we will denote the category of separated schemes of finite type over $\bfk$ and $\LCI$  will stand for its full subcategory constituted by the objects whose structural morphism is a local complete intersection. For a given category $C$ we will write $C'$ to refer to its subcategory given by allowing only projective morphisms. $\textbf{Ab}_*$ represents the category of graded abelian groups.

\section{Preliminaries}
The goal of this section is to collect some basic properties of Borel--Moore homology theories and to translate in this context some of the results on generalised Segre classes appeared in \cite{HudsonMatsumura}.
\subsection{Borel--Moore homology theories}

An oriented Borel--Moore homology theory on  $\SCH$ (or \textit{mutatis mutandis} on $\LCI$) is given by a covariant functor $A_*: \SCH'\rightarrow \mathbf{Ab}_*$ , by a family of pullback maps $\{f^*:A_*(Y)\rightarrow A_*(X)\}$ associated to \textit{l.c.i.} morphism and by an external product $A_*(X)\otimes A_*(Y)\rightarrow A_*(X\times_{\spec\,k} Y)$. Let us remind the reader that a morphism is a \textit{local complete intersection} if and only if it can be factored as the composition of a regular embedding and a smooth morphism. A detailed description of the properties that these three components have to satisfy would force us to take a significant detour, so we will focus only on the aspects that are more relevant to our work and refer the reader to \cite[Definition 5.1.3]{LevineMorel} for the precise definition. 

For us the most relevant feature of oriented Borel--Moore homology theories is that they satisfy the projective bundle formula. Roughly speaking it states that for every vector bundle $E$ of rank $e$ with $X\in \SCH$, the evaluation of $A_*$ on the associated dual projective bundle $\PP^*(E)\stackrel{q}\rightarrow X$ can be described in terms of $A_*(X)$. More precisely for $i\in\{0,1,\dots,e-1\}$ one has operations
$$\xi^{(i)}: A_{*+i-e+1}(X)\longrightarrow A_*(\PP^*(E))$$
given by $\xi^{(i)}:= \tilde{c}_1(\calQ)^i\circ q^*$, where $\calQ\rightarrow \PP^*(E)$ is the tautological line bundle and $\tilde{c}_1(\calQ):=s^*\circ s_*$, for any section $s:\PP^*(E)\rightarrow \calQ$. Altogether these yield the following isomorphism
$$\varPsi:\bigoplus_{i=0}^{e-1}A_{*+i-e+1}(X)\stackrel{\sum_{i=0}^{e-1}\xi^{(i)}}\longrightarrow A_*(\PP^*(E)).$$ 
A very important consequence of this is that every oriented Borel--Moore homology theory admits a theory of Chern class operators: to $E$ one associates $\{\tilde{c}^A_i(E):A_*(X)\rightarrow A_{*-i}(X)\}_{0\leq i \leq e}$. These are defined by setting $\tilde{c}^A_0(E)=\text{id}_{A_*(X)}$ and, up to a sign, by considering the different components of $\varPsi^{-1}\circ\xi^{(e)}$, so that one obtains the relation 
$$\sum_{i=0}^e (-1)^i\xi^{(e-i)}\circ \tilde{c}^A_i(E)=0.$$
These operators can be collected in the so-called \textit{Chern polynomial} $\tilde{c}^A(E;u):=\sum_{i=0}^e \tilde{c}^A_i(E)u^i$ and it is worth mentioning that, in view of the Whitney formula, its definition can be extended to the Grothendieck group of vector bundles by setting 
$$\tilde{c}^A(E-F;u):=\frac{\tilde{c}^A(E;u)}{\tilde{c}^A(F;u)}.$$   

Beside being extremely useful for computations, Chern classes allow one to get some insight on how a general oriented Borel--Moore homology theory $A_*$ differs from the Chow group $CH_*$, probably the most commonly known example. Let us consider, as an example, the behaviour of the first Chern class with respect to the tensor product of two line bundles $L$ and $M$. While in $CH_*$ one has 
$$\tilde{c}^{CH}_1(L\otimes M)=\tilde{c}^{CH}_1(L)+\tilde{c}^{CH}_1(M),$$
in general the relation between the three Chern class operators is described by a \textit{formal group law} $\big(A_*(\spec k),F_A\big)$, where $F_A(u,v)$ is a special power series with coefficients in the coefficient ring of the theory $A_*(\spec k)$. The precise relation is given by
$$\tilde{c}^{A}_1(L\otimes M)=F_A\left(\tilde{c}^A_1(L),\tilde{c}^A_1(M)\right).$$    
In a similar fashion, whereas in $CH_*$ one simply has $\tilde{c}_1^{CH}(L^\vee)=-\tilde{c}_1^{CH}(L)$, in general one needs to introduce the \textit{formal inverse} $\chi_A$, a power series in one variable satisfying both
$$\tilde{c}_1^{A}(L^\vee)=\chi_A\big(\tilde{c}_1(L)\big) \text{\quad and\quad } F_A\big(u,\chi_A(u)\big)=0.$$
In some case we will denote the formal inverse $\chi_A(u)$ simply by $\overline{u}$.

All our computations will take place in the algebraic cobordism of Levine--Morel $\Omega_*$ and our choice is motivated by the following fundamental result.

\begin{thm}[\protect{\cite[Theorems 7.1.3 and 4.3.7]{LevineMorel}}]
$\Omega_*$ is universal among oriented cohomology theories on $\LCI$. That is, for any other oriented Borel-Moore homology theory $A_*$ there exists a unique morphism 
$$\vartheta_A:\Omega_*\rightarrow A_*$$
of oriented Borel--Moore homology theories. Furthermore, its associated formal group law $\big(\Omega_*(\Spec \bfk),F_\Omega\big)$ is isomorphic to the universal one defined on the Lazard ring $(\Laz,F)$.
\end{thm}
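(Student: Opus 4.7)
The approach, following Levine--Morel, is first to construct $\Omega_*$ by hand from geometric data and then to exhibit $\vartheta_A$ directly on generators. Concretely, one defines $\Omega_n(X)$ as the quotient of the free abelian group on isomorphism classes of projective morphisms $[f:Y\to X]$ with $Y$ smooth of pure dimension $n$, modulo the relations needed to enforce the axioms of an oriented Borel--Moore homology theory. This is carried out in two stages: first one builds a ``naive'' theory $\underline{\Omega}_*$ whose relations are just those forced by functoriality of pushforward and l.c.i.\ pullback together with the external product; then one imposes the formal group law axiom, which expresses $\tilde c_1(L\otimes M)$ as a universal power series in $\tilde c_1(L)$ and $\tilde c_1(M)$ with coefficients in $\underline{\Omega}_*(\Spec\bfk)$, together with the section axiom ensuring $s^*s_* = \tilde c_1$ for sections of line bundles. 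Verifying that the resulting functor really is an oriented Borel--Moore homology theory --- in particular that it satisfies the projective bundle formula and the localisation sequence --- is the essential geometric content, and the characteristic zero hypothesis enters precisely here through the use of resolution of singularities and weak factorisation of birational maps.

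Given this construction, I would define the comparison map on generators by
$$\vartheta_A\bigl([f:Y\to X]_\Omega\bigr) := f^A_*\bigl(1_Y^A\bigr)\in A_*(X),$$
where $1_Y^A = \pi_Y^*(1_{\Spec\bfk}^A)$ is the fundamental class coming from the structure morphism $\pi_Y:Y\to \Spec\bfk$. That this descends to $\Omega_*(X)$ is immediate from the construction: every defining relation of $\Omega_*$ was imposed precisely to reflect a property that any oriented Borel--Moore homology theory $A_*$ satisfies by axiom. Compatibility of $\vartheta_A$ with projective pushforward, l.c.i.\ pullback, and external products is then a routine axiom chase on generators. For uniqueness, observe that any morphism $\Omega_* \to A_*$ of oriented Borel--Moore homology theories must send $[f:Y\to X]_\Omega = f^\Omega_* \pi_Y^*(1_{\Spec\bfk}^\Omega)$ to $f^A_* \pi_Y^*(1_{\Spec\bfk}^A)$, and since such classes generate $\Omega_*(X)$ as $f$ and $Y$ vary, the map is completely forced.

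The formal group law statement proceeds in two directions. The universal property of the Lazard ring supplies a unique ring homomorphism $\Laz \to \Omega_*(\Spec\bfk)$ classifying the pair $(\Omega_*(\Spec\bfk), F_\Omega)$. Showing it is an isomorphism requires a genuine computation of $\Omega_*$ over the base point: one produces explicit generators from classes such as $[\PP^n \to \Spec\bfk]_\Omega$ and extracts the correct relations between them from suitable degenerations, adapting the Quillen--Milnor strategy from complex cobordism to the algebraic setting. This is the step I expect to be the main obstacle, as the construction of $\vartheta_A$ and its uniqueness are essentially formal once a generators-and-relations presentation is in hand, whereas proving that this presentation actually yields a Borel--Moore homology theory whose coefficient ring coincides with $\Laz$ is the substantive content of the Levine--Morel monograph.
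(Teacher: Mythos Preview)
The paper does not prove this theorem at all: it is stated as a citation of \cite[Theorems 7.1.3 and 4.3.7]{LevineMorel} and used as a black box, with no argument given. There is therefore nothing in the paper to compare your proposal against.

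That said, your outline is a reasonable sketch of the Levine--Morel strategy, and you correctly identify where the real work lies (verifying the projective bundle formula and the localisation sequence for the constructed theory, and computing the coefficient ring). One caveat: your description of the construction is slightly compressed. In Levine--Morel the passage from cobordism cycles to $\Omega_*$ goes through intermediate theories $\calZ_*\to\underline{\Omega}_*\to\Omega_*$, and the relations imposed are not literally ``all axioms of an oriented Borel--Moore homology theory'' but a carefully chosen minimal set (the dimension axiom, the section relation, and the formal group law relation), after which the remaining axioms must be \emph{proved} rather than imposed. This is more than a technicality, since imposing too many relations naively could collapse the theory. But as a high-level summary your account is accurate, and for the purposes of this paper no more is needed than the citation.
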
 

One consequence of this universality is that all the formulas obtained for $\Omega_*$ can be specialised to every other oriented Borel--Moore homology theory $A_*$. In other words, algebraic cobordism allows one to work with all theories at once. Since we will only work with algebraic cobordism, in the remainder of the paper we will remove the subscript $_\Omega$ from the notation.

Let us conclude our general discussion by briefly mentioning the construction of fundamental classes and some results which can be used to compute them. To every $X\in\SCH$ whose structural morphism $\pi_X$ is l.c.i. we associate its fundamental class by setting $1_X:=\pi_X^*(1)$. Notice that here $1$ stands for the multiplicative unit in $A_*(\spec k)$. 
In the special case of the zero scheme of a bundle, the fundamental class can be computed via the following lemma.

\begin{lem}[Lemma 6.6.7 \cite{LevineMorel}]\label{lemCKGB}
Let $E$ be a vector bundle of rank $e$ over $X \in \SCH$. Suppose that $E$ has a section $s: X \to E$ such that the zero scheme of $s$, $i:Z\to X$ is a regularly embedded closed subscheme of codimension $d$. Then we have
\[
\tilde{c}_e(E) = i_*\circ i^*.
\]
In particular, if $X$ is an l.c.i. scheme, we have 
\[
\tilde{c}_e(E)(1_X) = i_*(1_Z).
\]
\end{lem}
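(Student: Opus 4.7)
The plan is to reduce the identity $\tilde c_e(E) = i_*\circ i^*$ to the \emph{self-intersection formula for the zero section}, $0^*\circ 0_* = \tilde c_e(E)$, and then transfer this from the zero section $0: X\to E$ to the given section $s:X\to E$ via a Tor-independent base change. The essential geometric input is the Cartesian square
\[
\begin{CD}
Z @>i>> X\\
@ViVV @VVsV\\
X @>>0> E,
\end{CD}
\]
together with the fact that both $s$ and $0$ are regular embeddings (any section of a vector bundle is regular), and the assumption that $Z\hookrightarrow X$ is regularly embedded of codimension $d$ (which must equal $e$ since it is cut out by $e$ equations coming from a local trivialisation of $E$).

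First I would prove that $s^* = 0^*$ as operators $A_*(E)\to A_{*-e}(X)$. Both $s$ and $0$ are l.c.i.\ morphisms, and since each is a section of $\pi: E\to X$, one has $\pi\circ s = \pi\circ 0 = \mathrm{id}_X$, so $s^*\circ \pi^* = 0^*\circ \pi^* = \mathrm{id}$. The projective bundle formula for $\PP^*(E\oplus \calO)\to X$ easily implies the homotopy property $\pi^*: A_*(X)\iso A_{*+e}(E)$; this gives $s^* = 0^* = (\pi^*)^{-1}$. (This step is essentially the only place where one uses that $A_*$ is an oriented Borel--Moore homology theory, via the projective bundle formula.)

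Next I would invoke the self-intersection formula for the zero section: $0^*\circ 0_* = \tilde c_e(E)$ on $A_*(X)$. This is the key structural fact and, together with the hardest step below, is the main obstacle of the proof. It can be established by reducing, through the deformation to the normal cone, to the case where $E$ is split as a sum of line bundles, at which point it follows from iterated use of the identity $\tilde c_1(L) = \sigma^*\circ \sigma_*$ for any section $\sigma$ of a line bundle $L$, combined with the Whitney formula built into the Chern-class operators. Finally, and this is the technical heart of the argument, one needs the base change identity
\[
s^*\circ 0_* \;=\; i_*\circ i^*
\]
coming from the Cartesian square above. Its validity requires the square to be Tor-independent, which is guaranteed precisely by the regular-embedding hypothesis on $Z\subset X$ having the expected codimension $e$. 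Combining the three identities yields
\[
\tilde c_e(E) \;=\; 0^*\circ 0_* \;=\; s^*\circ 0_* \;=\; i_*\circ i^*,
\]
which is the first assertion.

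For the last statement, when $X$ is l.c.i.\ the subscheme $Z$ is also l.c.i.\ over $\bfk$ (composition of $i$ with $\pi_X$ is l.c.i.), so $1_Z$ is defined and compatibility of pullback with fundamental classes gives $i^*(1_X) = 1_Z$. Applying the operator identity to $1_X$ yields $\tilde c_e(E)(1_X) = i_*i^*(1_X) = i_*(1_Z)$, as required.
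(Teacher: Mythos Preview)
The paper does not prove this lemma at all; it simply quotes it as Lemma~6.6.7 of Levine--Morel, so there is nothing in the paper to compare your argument against. Your proposed proof is the standard one and is correct: reduce to the self-intersection formula $0^*\circ 0_* = \tilde c_e(E)$ for the zero section, use homotopy invariance (so $s^*=0^*=(\pi^*)^{-1}$), and then apply Tor-independent base change along the square $Z = X\times_E X$ to get $s^*\circ 0_* = i_*\circ i^*$. This is essentially the route taken in Levine--Morel as well, so your writeup is an appropriate unpacking of the cited reference rather than an alternative approach.
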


Finally, as it will play an important role in our computations, we would like to make more explicit the case of the fundamental class of a nonreduced divisor. For this we will require a bit of notation. For every integer $n\geq2$, let $n\cdot_{F_A} u$ be the \textit{formal multiplication} by $n$, \textit{i.e} the power series obtained by adding $n$ times the variable $u$ using the formal group law $F_A$. Since $F_A$ is a formal group law, one has 
\begin{align}\label{formalmult}
n\cdot_{F_A}(u)=u\cdot F^{(n)}_A(u)
\end{align}
 for some degree 0 power series $F^{(n)}_A(u)$ whose costant term is $n$. We are now able to restate \cite[Proposition 7.2.2]{LevineMorel} for the particular case we will need.
 

\begin{lem}\label{lem div}
Let $W$ be a smooth scheme and $D$ a smooth prime divisor of $W$. For any integer $n\geq2$, let $|E|$ be the closed subscheme associated to the divisor $E=nD$. If $L$ is the line bundle corresponding to $D$ and $\iota:D\rightarrow |E|$ is the natural morphism, then in $A_*(|E|)$ we have 
$$1_{|E|}=\iota_*\Big(F_A^{(n)}\big(\tilde{c}^A_1(L_{|D})\big)(1_D)\Big),$$
where $L_{|D}$ is the restriction of $L$ to $D$.
\end{lem}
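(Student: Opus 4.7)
The statement is the specialisation of the general formula of \cite[Proposition 7.2.2]{LevineMorel} to the case of a principal Cartier divisor whose support consists of a single smooth prime divisor $D$ taken with multiplicity $n$. My plan is to derive it directly from the two tools already at hand, namely Lemma \ref{lemCKGB} and the formal group law identity (\ref{formalmult}).

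Concretely, I would choose a section $s$ of $L$ whose zero scheme is $D$; then $s^n$ is a section of $L^{\otimes n}$ whose zero scheme is precisely $|E|$. Denote the closed immersions by $j:|E|\hookrightarrow W$ and $\iota':=j\circ\iota:D\hookrightarrow W$, both of which are regular of codimension one. Applying Lemma \ref{lemCKGB} to $(L^{\otimes n},s^n)$ gives $\tilde c_1(L^{\otimes n})(1_W)=j_*(1_{|E|})$, while applying it to $(L,s)$ gives the operator identity $\tilde c_1(L)=\iota'_*\circ(\iota')^*$. By (\ref{formalmult}), one has the operator factorisation $\tilde c_1(L^{\otimes n}) = \tilde c_1(L)\circ F_A^{(n)}(\tilde c_1(L))$. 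Combining these three ingredients, and using that pullback commutes with Chern class operators together with $(\iota')^*(1_W)=1_D$, would yield
\begin{equation*}
j_*(1_{|E|}) \;=\; \iota'_*\bigl(F_A^{(n)}(\tilde c_1(L|_D))(1_D)\bigr) \;=\; j_*\iota_*\bigl(F_A^{(n)}(\tilde c_1(L|_D))(1_D)\bigr).
\end{equation*}

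The main obstacle will be promoting this equality from $A_*(W)$ back to $A_*(|E|)$, since $j_*$ is not injective in general. To handle this, I would exploit that $\iota:D\to|E|$ is a nilpotent thickening, so that $\iota_*:A_*(D)\to A_*(|E|)$ is an isomorphism and hence $1_{|E|}$ admits a unique preimage $\alpha\in A_*(D)$. One then identifies $\alpha$ with $F_A^{(n)}(\tilde c_1(L|_D))(1_D)$ by appealing to the refined portion of the argument in \cite[Proposition 7.2.2]{LevineMorel}, which is carried out directly at the level of $A_*(|E|)$ via the explicit geometric construction of fundamental classes in algebraic cobordism, rather than after pushforward to the ambient smooth scheme.
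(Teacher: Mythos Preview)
The paper does not give a proof of this lemma: it simply states it as a restatement of \cite[Proposition 7.2.2]{LevineMorel} in the special case where the support of the divisor is a single smooth prime component. So there is no argument in the paper to compare your proposal against beyond that citation.

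Your attempt to derive the formula independently from Lemma \ref{lemCKGB} and the identity (\ref{formalmult}) is natural, and the computation you outline in $A_*(W)$ is correct: one does obtain $j_*(1_{|E|})=j_*\iota_*\bigl(F_A^{(n)}(\tilde c_1(L|_D))(1_D)\bigr)$. However, as you yourself point out, this only gives the statement after pushforward along $j$, and neither $j_*$ nor $\iota'_*=j_*\circ\iota_*$ is injective in general, so the equality in $A_*(|E|)$ does not follow from these considerations alone. Your observation that $\iota_*:A_*(D)\to A_*(|E|)$ is an isomorphism is correct and useful, but it does not by itself pin down the preimage $\alpha$ of $1_{|E|}$: knowing $\iota'_*(\alpha)=\iota'_*\bigl(F_A^{(n)}(\tilde c_1(L|_D))(1_D)\bigr)$ in $A_*(W)$ is still not enough.

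Your final step, appealing to ``the refined portion of the argument in \cite[Proposition 7.2.2]{LevineMorel}'' carried out at the level of $A_*(|E|)$, is precisely what the paper does from the outset. Once you invoke that result, the preliminary computation in $A_*(W)$ becomes redundant: Levine--Morel's proposition already delivers the identity in $A_*(|E|)$ directly. So your proposal is not incorrect, but the part that is genuinely independent does not close the gap, and the part that closes the gap is the same citation the paper uses.
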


\subsection{Segre operators}
In \cite{HudsonMatsumura}, in order to be able to describe the pushforwards along projective bundles, we generalised to algebraic cobordism the classical definition of Segre classes given in \cite{IntersectionFulton} by Fulton. In the context of oriented Borel--Moore homology theories we can rephrase it as follows.

\begin{defn}\label{defSegre}
Let $E$ be a vector bundle of rank $e$ over $X$. For each $m \in \ZZ$, consider the dual projective bundle $\pi: \PP^*(E\oplus O_X^{\oplus n}) \to X$ where $O_X$ is the trivial line bundle over $X$. Define the degree $m$ \emph{Segre class operatos} $\widetilde{\scS}_m(E)$ of $E$ by 
\[
\widetilde{\scS}_m(E) = \pi_*\circ\tilde{c}_1(\calQ)^{m+e+n-1}\circ \pi^*, 
\]
where  $\calQ$ is the first Chern class of the tautological quotient line bundle of $\PP^*(E\oplus \calO_X^{\oplus n})$. We set
\[
\widetilde{\scS}(E;u) := \sum_{m\in \ZZ} \widetilde{\scS}_m(E)u^m.
\]
\end{defn}

\begin{rem}
It is worth pointing out that the definition of Segre classes does not depend on the choice of $n$: any increase of this parameter will leave $\widetilde{\scS}_m(E)$ unchanged.
\end{rem}

\begin{rem}
If $E$ is a line bundle and $n=0$, then one has $\calQ=E$ and $\pi=\id_X$. In particular this implies that $\widetilde{\scS}_m(E)=\tilde{c}_1(E)^m$ for all $m\geq 0$.
\end{rem}

It is actually possible to relate these operators to the Chern classes as in \cite[Theorem 3.6]{HudsonMatsumura}, but for this we first need to define the following power series.
 
\begin{defn}\label{defwtilde1}
Let $\{x_1,\dots, x_e\}$ be the Chern roots of a vector bundles $E$ and define $\tilde{w}_{-s}(E)$ 
by setting
\[
\prod_{i=1}^e P(z, x_{i}) =\sum_{s\geq 0}^{\infty} \tilde{w}_{-s}(E) z^{s},
\]
where $P$ is the unique power series satisfying the equality $F(z,\chi(x)) = (z-x) P(z,x)$. Since this expression is symmetric it is independent of the chosen roots and we can also set $\tilde{w}(E;u):=\sum_{s\geq 0}^{\infty} \tilde{w}_{-s}(E)u^{-s}$. It should be noticed that $\tilde{w}_0(E)$ has constant term $1$ and as a consequence $\tilde{w}(E;u)$ is an invertible power series. 
\end{defn}

\begin{prop}\label{thmSegre}
Let $E\rightarrow X$ be a vector bundle of rank $e$ over $X\in \SCH$. Then we have the following equality of power series:
\[
\widetilde{\scS}(E;u) =\frac{\scP(u)}{\tilde{c}(E;-u)\tilde{w}(E;u)}.
\]
Here $\scP(u):=\sum_{i=0}^{\infty} [\PP^i]u^{-i}$ is the power series collecting the pushforwards classes of projective spaces $[\PP^i]:=[\PP^i\rightarrow \spec k]\in\Laz^{-i}$.
\end{prop}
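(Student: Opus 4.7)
The plan is to apply the splitting principle to reduce to the case where $E = L_1 \oplus \cdots \oplus L_e$ is a direct sum of line bundles with Chern roots $x_i := \tilde{c}_1(L_i)$. Setting $V := E \oplus \calO_X^{\oplus n}$ and $r := e + n$, the bundle $V$ has Chern roots $x_1, \ldots, x_e, 0, \ldots, 0$ (with $n$ trailing zeros), and by Definition \ref{defSegre} the task becomes the computation of
$$\widetilde{\scS}(E; u) \;=\; \sum_{m \in \ZZ} \pi_*\!\bigl(\tilde{c}_1(\calQ)^{m + r - 1}\bigr) \, u^m, \qquad \pi: \PP^*(V) \to X,$$
understood in the stable range where $\widetilde{\scS}_m(E)$ is independent of $n$.

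The main technical input will be a pushforward formula for $\pi$ valid in a general oriented Borel--Moore homology theory. First I would iterate the projective bundle formula on the factorisation $V = L_1 \oplus \cdots \oplus L_e \oplus \calO^{\oplus n}$ (equivalently, run a Quillen-style residue computation against the formal group law $F$) to produce an identity of the form
$$\sum_{m \in \ZZ} \pi_*\!\bigl(\tilde{c}_1(\calQ)^{m + r - 1}\bigr) \, u^m \;=\; \frac{\scP(u)}{\prod_{i=1}^{e} (1 - x_i u) \, P(1/u, x_i)}.$$
In this identity, the numerator $\scP(u) = \sum_{i \ge 0} [\PP^i] u^{-i}$ collects the pushforwards of projective spaces $[\PP^i \to \spec \bfk]$ arising from the $n$ trivial summands of $V$, which in the stable $n \to \infty$ limit assemble into the full series $\scP(u)$; the denominator encodes the contribution of the nontrivial Chern roots $x_i$, one factor per line bundle $L_i$.

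The last step identifies this with the claimed right-hand side. The factor $\tilde{c}(E; -u) = \prod_{i=1}^{e}(1 - x_i u)$ is the Chern polynomial in the split case. By Definition \ref{defwtilde1}, $\tilde{w}_{-s}(E)$ is the coefficient of $z^s$ in $\prod_i P(z, x_i)$, so the substitution $z = 1/u$ gives $\tilde{w}(E; u) = \prod_{i=1}^e P(1/u, x_i)$. Hence the denominator factorises as $\tilde{c}(E; -u) \, \tilde{w}(E; u)$ and the desired identity follows. The hard part will be establishing the Borel--Moore pushforward formula above for $\PP^*(V) \to X$ with $V$ split --- in particular, verifying that the $n$ trivial summands of $V$ contribute precisely $\scP(u)$ in the stable limit --- after which everything reduces to formal power series manipulation.
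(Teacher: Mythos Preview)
The paper does not actually prove this proposition: it is quoted from \cite[Theorem~3.6]{HudsonMatsumura} and stated here without argument, so there is no ``paper's own proof'' to compare against. Your outline---reduce to the split case via the splitting principle, establish a Quillen-type pushforward formula for powers of $\tilde{c}_1(\calQ)$ along $\PP^*(E\oplus\calO^{\oplus n})\to X$, and then identify the denominator $\prod_i(1-x_iu)P(1/u,x_i)$ with $\tilde{c}(E;-u)\tilde{w}(E;u)$---is the standard route and is essentially what the cited reference does.

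Two minor remarks. First, you phrase the contribution of the trivial summands as a ``stable $n\to\infty$ limit'', but by the remark immediately following Definition~\ref{defSegre} the operators $\widetilde{\scS}_m(E)$ are independent of $n$, so no limiting procedure is needed: any single $n$ large enough for the given $m$ already produces the coefficient $[\PP^{-m}]$ of $\scP(u)$. Second, your identification $\tilde{w}(E;u)=\prod_i P(1/u,x_i)$ is correct and follows directly from Definition~\ref{defwtilde1} by substituting $z=u^{-1}$; this, together with $\tilde{c}(E;-u)=\prod_i(1-x_iu)$, makes the final step purely formal, exactly as you say. The genuine content is the pushforward identity itself, which you rightly flag as the part requiring work.
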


This relation can be used to lift the notion of Segre classes to the Grothendieck group of vector bundles (\cite[Definition 3.8]{HudsonMatsumura}). In fact, it easily follows from Definition \ref{defwtilde1} that $\tilde{w}(-\,;u)$ is additive on exact sequences and so one can meaningfully evaluate such collection of operators on virtual bundles $[E]-[F]$ by setting  $\tilde{w}(E-F;u):=\tilde{w}(E;u)\circ \tilde{w}^{-1}(F;u)$. 

\begin{defn}\label{relSeg}
Let $X\in \Sch$. For vector bundles $E$ and $F$ over $X$, respectively of rank $e$ and $f$, define the \textit{relative Segre class operators} $\tscS_m(E-F)$ on $\Omega_*(X)$ as
\begin{equation}\label{dfSegX1}
\tscS(E-F;u):=\sum_{m\in \ZZ} \tscS_m(E-F) u^m = \scP(u)\frac{\tilde{c}(F;-u)\circ \tilde{w}(F;u)}{\tilde{c}(E;-u)\circ\tilde{w}(E;u)}
\end{equation}
or, equivalently, by setting
\begin{equation}\label{eqRelSeg1}
\tscS_m(E-F) :=\sum_{p=0}^{\infty}\sum_{q = 0}^{\infty}  (-1)^p\tilde{c}_p(F)\circ\tilde{w}_{-q}(F)\circ\tscS_{m-p+q}(E).
\end{equation}
\end{defn}

\begin{rem}
It is easy to check that the last equality holds even if one allows $F$ to be a virtual bundle.  
\end{rem}

\begin{rem}\label{remLINE}
In the special case in which $E$ is a line bundle one has $\pi = \id_X$ and as a consequence $\tilde{c}_1(\calQ)^s\circ \tilde{c}_f(\calQ \otimes F^{\vee}))  = \widetilde{\scS}_{f-e+1+s}(E-F)$.
\end{rem}

We conclude this section by providing a description of relative Segre classes in terms of pushforwards of Chern classes. This is should be seen as an analogue of \cite[Theorem 3.9]{HudsonMatsumura}.

\begin{thm}\label{thmlcipush}
Let $X \in \SM$ and $i: Z \inc X$ a regular embedding. Let $E$ be a vector bundle of rank $e$ over $X$ and $F$ a vector bundle of rank $f$ over $Z$. Let $\pi': \PP^*(E|_Z) \to Z$ be the dual projective bundle of the restriction $E|_Z$ of $E$ and $Q_Z$ its universal quotient line bundle. The pullback of $F$ to $\PP^*(E|_Z)$ is denoted also by $F$. In $\Omega_*(Z)$, we have
\[
\pi'_*\circ \tilde{c}_1(\calQ_Z)^s \circ \tilde{c}_f(\calQ_Z \otimes F^{\vee})(1_{\PP^*(E|_Z)}) = \tscS_{s+f - e+1}(E|_Z - F)(1_Z).
\]
\end{thm}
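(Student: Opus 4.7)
The plan is to imitate the argument of \cite[Theorem 3.9]{HudsonMatsumura}, suitably transcribed into the Borel--Moore setting. The core idea is to expand $\tilde{c}_f(\calQ_Z\otimes F^{\vee})$ via the splitting principle, then use the defining identity $F(u,\chi(x))=(u-x)P(u,x)$ to split it into a piece governed by the Chern classes of $F$ and a piece governed by the operators $\tilde{w}_{-q}(F)$ of Definition \ref{defwtilde1}. Both pieces originate from $Z$, so after composing with $\tilde{c}_1(\calQ_Z)^s$ and pushing forward via $\pi'_*$ they come outside by the projection formula, leaving a residual pushforward that is precisely the Segre operator of $E|_Z$ from Definition \ref{defSegre}.

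Concretely, writing the Chern roots of $F$ formally as $x_1,\dots,x_f$, the splitting principle gives
\begin{equation*}
\tilde{c}_f(\calQ_Z \otimes F^{\vee})
=\prod_{i=1}^{f} F\bigl(\tilde{c}_1(\calQ_Z),\chi(x_i)\bigr)
=\Bigl(\prod_{i=1}^{f}(\tilde{c}_1(\calQ_Z)-x_i)\Bigr)\Bigl(\prod_{i=1}^{f}P(\tilde{c}_1(\calQ_Z),x_i)\Bigr).
\end{equation*}
Expanding the first factor in elementary symmetric functions of the $x_i$ and the second via Definition \ref{defwtilde1} yields
\begin{equation*}
\tilde{c}_f(\calQ_Z \otimes F^{\vee})=\sum_{p,q\geq 0}(-1)^{p}\,\tilde{c}_p(F)\,\tilde{w}_{-q}(F)\,\tilde{c}_1(\calQ_Z)^{\,f-p+q}.
\end{equation*}
Since the operators $\tilde{c}_p(F)$ and $\tilde{w}_{-q}(F)$ are pulled back from $Z$, precomposing with $\tilde{c}_1(\calQ_Z)^s$, evaluating on $1_{\PP^*(E|_Z)}$, and applying $\pi'_*$ (with the projection formula) produces
\begin{equation*}
\sum_{p,q\geq 0}(-1)^{p}\,\tilde{c}_p(F)\circ\tilde{w}_{-q}(F)\circ\pi'_*\circ\tilde{c}_1(\calQ_Z)^{\,s+f-p+q}(1_{\PP^*(E|_Z)}).
\end{equation*}
Applying Definition \ref{defSegre} with $n=0$ (permissible by the remark following it), the inner pushforward equals $\widetilde{\scS}_{s+f-p+q-e+1}(E|_Z)(1_Z)$; comparing with formula \eqref{eqRelSeg1} for $\tscS_{s+f-e+1}(E|_Z-F)$ finishes the proof.

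The main subtlety I expect is the rigorous justification of the splitting principle as an identity of \emph{operators} on $\Omega_*(\PP^*(E|_Z))$, rather than an identity of classes. One handles this by pulling back to the full flag bundle $\pi_{\mathcal{F}}:\mathcal{F}\!\ell(F)\to Z$, over which $F$ genuinely splits as a direct sum of line bundles; after further pulling back to $\PP^*(E|_{\mathcal{F}\!\ell(F)})$, the splitting identity is just the Whitney product formula for $\calQ_{\mathcal{F}\!\ell}\otimes F^{\vee}$. Because $\pi_{\mathcal{F}}^{*}$ is injective on $\Omega_{*}$ and both sides of the desired equality are symmetric in $x_1,\dots,x_f$, the identity descends to $Z$. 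Once this is in place, everything else reduces to bookkeeping with the power series $F$, $\chi$, $P$ and the compatibility of Chern class operators with pullback and pushforward.
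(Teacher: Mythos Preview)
Your proposal is correct and follows essentially the same route as the paper: expand $\tilde{c}_f(\calQ_Z\otimes F^{\vee})$ via Chern roots into $\sum_{p,q}(-1)^p\tilde{c}_p(F)\circ\tilde{w}_{-q}(F)\circ\tilde{c}_1(\calQ_Z)^{f-p+q}$, pull the $F$-operators through $\pi'_*$, recognise $\pi'_*\circ\tilde{c}_1(\calQ_Z)^{s+f-p+q}(1_{\PP^*(E|_Z)})$ as $\tscS_{s+f-e+1-p+q}(E|_Z)(1_Z)$ via $1_{\PP^*(E|_Z)}=\pi'^*1_Z$, and conclude by \eqref{eqRelSeg1}. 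The paper simply cites the Chern-root expansion as ``an easy computation analogue to \cite[Formula (3.1)]{HudsonMatsumura}'' rather than spelling out the splitting-principle justification you give, but the argument is the same.
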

\begin{proof}
Let us begin by observing that, since an easy Chern roots computation analogue to \cite[Formula (3.1)]{HudsonMatsumura} gives us 
$$\tilde{c}_f(\calQ_Z \otimes F^{\vee})=\sum_{p=0}^f \sum_{j=0}^{\infty}(-1)^p\tilde{c}_p(F)  \circ \tilde{w}_{-q}(F)\circ \tilde{c}_1(\calQ_Z)^{f-p+q},$$
the left hand side can be rewritten as
\begin{eqnarray}\label{eq push}
\sum_{p=0}^{f} \sum_{q=0}^{\infty} (-1)^p\tilde{c}_p(F)  \circ \tilde{w}_{-q}(F)\circ \pi'_*\circ \tilde{c}_1(\calQ_Z)^{s+f-p+q}(1_{\PP^*(E|_Z)}).
\end{eqnarray}
 We now notice that $1_{\PP^*(E|_Z)}$ can be rewritten as $\pi'^*1_Z$ so to obtain the Segre classes of $E|_{Z}$. Then (\ref{eq push}) becomes  
\begin{eqnarray*}
\sum_{p=0}^{f} \sum_{q=0}^{\infty} (-1)^p\tilde{c}_p(F)  \circ \tilde{w}_{-q}(F)\circ \tscS_{s+f-e+1-p+q}(E|_Z)(1_Z).
\end{eqnarray*}
 The right hand side of the statement can be now obtained by using the definition of relative Segre classes (\ref{eqRelSeg1}).
\end{proof}
\ 
\section{Symplectic degeneracy loci}
For this section we fix a nonnegative integer $k$.
\subsection{$k$-strict partitions and characteristic indices}\label{seckstrict}
A $k$-strict partition $\lambda$ is a weakly decreasing infinite sequence $\lambda_1,\lambda_2,\dots$ such that the number of nonzero parts is finite, and if $\lambda_i > k$, then $\lambda_i>\lambda_{i+1}$. The length of $\lambda$ is the number of nonzero parts $\lambda$. Let $\SP^k$ be the set of all $k$-strict partition.  Let $\SP_r^{k}$ be the set of all $k$-strict partitions with the length at most $r$. If $\lambda \in \SP_r^{k}$, then we write $\lambda=(\lambda_1,\dots,\lambda_r)$. Let  $\SP^k(n)$ be the set of all $k$-strict partitions such that $\lambda_1\leq n+k$ and the length of $\lambda$ is at most $n-k$.

Let $W_{\infty}$ be the infinite hyperoctahedral group which can be identified with the group of all signed permutation, \textit{i.e.}, all permutations $w$ of $\ZZ \backslash \{0\}$ such that $w(i)\not= i$ for only finitely many $i\in \ZZ \backslash \{0\}$, and $\overline{w(i)}=w(\bar{i})$ for all $i$ where $\bar{i}:=-i$. A signed permutation $w$ is determined by the sequence $(w(1),w(2),\dots)$ which we call one line notation. An element $w \in  W_{\infty}$ is called $k$-Grassmannian if 
\[
0<w(1) <\cdots < w(k), \ \ w(k+1)<w(k+2)<\dots.
\]
The set of all $k$-Grassmannian elements in $W_{\infty}$ is denoted by $W_{\infty}^{(k)}$. 

There is a bijection between $W_{\infty}^{(k)}$ and $\SP^k$ defined as follows. For each $w\in W_{\infty}^{(k)}$, the corresponding $k$-strict partition is given by
\[
\lambda_i :=\begin{cases}
w(k+i) & \mbox{ if $w(k+i)<0$}\\
\#\{ j\leq k \ |\ w(j) > w(k+i) \} & \mbox{ if $w(k+i)>0$}.
\end{cases}
\]
For each $\lambda$ (and the corresponding signed permutation $w$), we define its characteristic index $\chi=(\chi_1,\chi_2,\dots)$ by
\[
\chi_i := \begin{cases}
-w(k+i)-1 & \mbox{if $w(k+i) <0$}\\
-w(k+i) & \mbox{if $w(k+i)>0$}. 
\end{cases}
\]
Moreover, the following notations are necessary for our formulas of Grassmannian degeneracy loci in type C and B: for each $\lambda \in \SP^k$ and the corresponding characteristic index $\chi$, define
\begin{eqnarray*}
C(\lambda)&:=&\{(i,j) \ |\ 1\leq  i < j, \ \ \chi_i + \chi_j \geq 0\}, \\
\gamma_j&:=& \sharp\{ i \ |\ 1\leq  i < j, \ \ \chi_i + \chi_j \geq 0\}\ \ \ \ \mbox{for each $j>0$}.
\end{eqnarray*}

\subsection{Symplectic degeneracy loci}\label{ssec: Lag deg}
Let $E$ be a symplectic vector bundle over $X$ of rank $2n$, \textit{i.e.}, we are given a nowhere degenerating section of $\wedge^2 E$. For a subbundle $F$ of $E$, we denote by $F^{\perp}$  the orthogonal complement of $F$ with respect to the symplectic form. Fix a reference flag $F^{\bullet}$ of subbundles of $E$
\[
0=F^n\subset F^{n-1} \subset \cdots \subset F^1 \subset F^0 \subset F^{-1} \subset \cdots \subset F^{-n}=E,  
\]
where  $\rk\, F^i=n-i$ and $(F^i)^{\perp} = F^{-i}$ for all $i$ with $-n \leq i \leq n$. Let $\SG^k(E) \to X$ be the Grassmannian bundle over $X$ consisting of pairs $(x,U_x)$ where $x \in X$ and $U_x$ is an $n-k$ dimensional isotropic subspace of $E_x$. Let $U$ be the tautological bundle of $\SG^k(E)$. 

For each $\lambda \in \SP^k(n)$ of length $r$, let $X_{\lambda}^C$ be the symplectic degeneracy locus in $\SG^k(E)$ defined by
\[
X_{\lambda}^C = \Big\{ (x, U_x)\in \SG^{k}(E) \ |\ \dim (U_x \cap F^{\chi_i}_x) \geq i, \ \ i=1,\dots, r\Big\}
\]
where $\chi=(\chi_1,\chi_2,\dots)$ is the characteristic index for $\lambda$.

Let $\Fl_r(U) \to \SG^k(E)$ be the $r$-step flag bundle of $U$ where the fiber at $(x,U_x) \in \SG^k(E)$ consists of the flag $(D_{\bullet})_x=\{(D_1)_x \subset \cdots (D_r)_x\}$ of subspaces of $U$ with $\dim (D_i)_x = i$. Let $D_1 \subset \cdots \subset D_r$ be the flag of tautological bundles of $\Fl_r(U)$.  We set $D_0=0$. The bundle $\Fl_r(U)$ can be constructed as a tower of projective bundles
\begin{eqnarray}\label{towerC}
&&\pi: \Fl_r(U)=\PP(U/D_{r-1}) \stackrel{\pi_r}{\longrightarrow} \PP(U/D_{r-2}) \stackrel{\pi_{r-1}}{\longrightarrow} \cdots \ \ \ \ \ \ \ \ \ \ \ \ \ \ \ \ \ \ \ \ \ \ \ \ \nonumber\\\label{C P tower}
&&\ \ \ \ \ \ \ \ \ \ \ \ \ \ \ \ \ \ \ \ \ \ \ \ \ \ \ \ \ \ \ \ \ \ \ \ \ \ \ \ \ \ \ \ \ \ \ \  \stackrel{\pi_3}{\longrightarrow} \PP(U/D_1) \stackrel{\pi_2}{\longrightarrow} \PP(U)  \stackrel{\pi_1}{\longrightarrow} \SG^k(E).
\end{eqnarray}
The quotient line bundle $D_j/D_{j-1}$ is regarded as the tautological line bundle of $\PP(U/D_{j-1})$ and we set $\tau_j:=c_1((D_j/D_{j-1})^{\vee})$. We are now able to define the resolution of singularities of the degeneracy loci.
\begin{defn}
For each $j=1,\dots,r$, we define a subvariety $Y_j$ of $\PP(U/D_{j-1})$ by 
\[
Y_j := \Big\{ (x,U_x, (D_1)_x, \dots, (D_j)_x) \in \PP(U/D_{j-1}) \ |\ (D_i)_x \subset F^{\chi_i}_x, \ i=1,\dots,{j}\Big\}.
\]
We set $Y_0:=\SG^k_r(U)$ and $Y_{\lambda}^C:=Y_r$. Let $P_{j-1}:=\pi_j^{-1}(Y_{j-1})$, $\pi_j': P_{j-1} \to Y_{j-1}$ the projection and $\iota_j: Y_j \to P_{j-1}$ the obvious inclusion. Let $\sfp_j:=\pi_j'\circ\iota_j$. We have the commutative diagram
\[
\xymatrix{
\PP(U/D_{j-1}) \ar[r]_{\pi_j} & \PP(U/D_{j-2})\\
P_{j-1} \ar[r]_{\pi_j'}\ar[u] & Y_{j-1}\ar[u]\\
Y_j\ar[u]_{\iota_j}\ar[ru]_{\sfp_j} &
}
\]
\end{defn}
The following lemma is known from \cite{HIMN}, where it was proved for $CK_*$. One can easily check that the proof works for an arbitrary oriented Borel-Moore homology and in particular for $\Omega_*$.
\begin{lem}\label{Ylem2C}
For each $j=1,\dots, r$, the variety $Y_j$ is regularly embedded in $P_{j-1}$ and $P_{j-1}$ is regularly embedded in $\PP(U/D_{j-1})$. Furthermore, in $\Omega_*(P_{j-1})$, we have
\[
\iota_{j*}(1_{Y_j})=\tilde{c}_{\lambda_j + n - k - j}\Big((D_j/D_{j-1})^{\vee}\otimes (D_{\gamma_j}^{\perp}/F^{\chi_j})\Big)(1_{P_{j-1}}).
\]
\end{lem}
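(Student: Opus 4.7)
The plan is to realise $Y_j$ inside $P_{j-1}$ as the zero scheme of a transverse section of the rank $\lambda_j+n-k-j$ vector bundle $V_j:=(D_j/D_{j-1})^{\vee}\otimes (D_{\gamma_j}^{\perp}/F^{\chi_j})$, and then to invoke Lemma \ref{lemCKGB}. The argument would proceed by induction on $j$, the base case $j=1$ being trivial since $P_0=\PP(U)$. Under the inductive hypothesis $Y_{j-1}\hookrightarrow \PP(U/D_{j-2})$ is a regular embedding, and hence $P_{j-1}=\pi_j^{-1}(Y_{j-1})\hookrightarrow \PP(U/D_{j-1})$ is also a regular embedding, being the base change of a regular embedding along the smooth projective bundle projection $\pi_j$.

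The bulk of the work is the construction of the section. First I would consider the evaluation morphism obtained by composing $D_j\subset U\subset E$ with the quotient $E\twoheadrightarrow E/F^{\chi_j}$. The strict monotonicity of the characteristic index (an easy check from the definition in Section \ref{seckstrict}) gives $\chi_{j-1}>\chi_j$, and hence $F^{\chi_{j-1}}\subset F^{\chi_j}$; since on $P_{j-1}$ we already have $D_{j-1}\subset F^{\chi_{j-1}}$, the evaluation vanishes on $D_{j-1}$ and descends to a map $D_j/D_{j-1}\to E/F^{\chi_j}$. Next one observes that on $P_{j-1}$ the subbundle $F^{\chi_j}$ is contained in $D_{\gamma_j}^{\perp}$: by definition of $\gamma_j$ we have $\chi_{\gamma_j}+\chi_j\geq 0$, so $F^{\chi_{\gamma_j}}\subset (F^{\chi_j})^{\perp}$, and combined with $D_{\gamma_j}\subset F^{\chi_{\gamma_j}}$ (which holds on $P_{j-1}$ since $\gamma_j<j$) this yields $D_{\gamma_j}\subset (F^{\chi_j})^{\perp}$, equivalently $F^{\chi_j}\subset D_{\gamma_j}^{\perp}$. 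Finally, the isotropy of $U$ together with $D_{\gamma_j}\subset D_{j-1}\subset U$ gives $D_j\subset U\subset U^{\perp}\subset D_{\gamma_j}^{\perp}$, so the evaluation lands in $D_{\gamma_j}^{\perp}/F^{\chi_j}$ and defines a section $\sigma_j$ of $V_j$ over $P_{j-1}$. By construction its zero scheme is exactly the locus where $D_j\subset F^{\chi_j}$, which is $Y_j$.

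It remains to verify that $\sigma_j$ is regular, so that Lemma \ref{lemCKGB} applies and produces the stated formula $\iota_{j*}(1_{Y_j})=\tilde{c}_{\lambda_j+n-k-j}(V_j)(1_{P_{j-1}})$. This is a codimension count: $\rk V_j=(2n-\gamma_j)-(n-\chi_j)=n+\chi_j-\gamma_j$, and a direct unpacking of the bijection $\SP^k\leftrightarrow W_\infty^{(k)}$ gives $\chi_j-\gamma_j=\lambda_j-k-j$, matching the expected codimension of $Y_j$ in $P_{j-1}$. Transversality follows by the same genericity argument as in \cite{HIMN}, since $\sigma_j$ is built only from the tautological objects and the reference flag.

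The main obstacle, were one to redo the proof from scratch, is not the Chern class manipulation but the bookkeeping of orthogonality relations needed to see that the evaluation factors through the smaller quotient $D_{\gamma_j}^{\perp}/F^{\chi_j}$ rather than merely through $E/F^{\chi_j}$, together with the combinatorial identity $\chi_j=\lambda_j+\gamma_j-k-j$. Both have already been settled in \cite{HIMN}; the argument transfers verbatim to $\Omega_*$ since it relies only on formal properties of oriented Borel--Moore homology (pullback along smooth morphisms, pushforward along projective morphisms, and the projective bundle formula), with no $K$-theoretic specificity.
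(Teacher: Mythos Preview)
Your proposal is correct and coincides with the approach the paper has in mind: the paper gives no proof of its own but simply cites \cite{HIMN} and remarks that the argument there, originally carried out for $CK_*$, goes through verbatim for any oriented Borel--Moore homology theory, which is precisely the conclusion you reach. Your sketch of the section construction, the orthogonality bookkeeping, and the rank identity $\chi_j=\lambda_j+\gamma_j-k-j$ is an accurate reconstruction of that cited argument; the only phrasing I would adjust is that regularity of $\sigma_j$ in \cite{HIMN} is established by a direct local computation rather than a ``genericity'' argument.
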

\begin{defn}\label{dfkappaC}
Let $\sfp:=\sfp_1\circ\cdots\circ\sfp_r: Y_{\lambda}^C \to \SG^k(E)$. Define the class $\kappa_{\lambda}^C \in \Omega_*(\SG^k(E))$ by
\[
\kappa_{\lambda}^C:= \sfp_*(1_{Y_{\lambda}^C}).
\]
\end{defn}
\begin{rem}
It is also known that $Y_{\lambda}^C$ is irreducible and has at worst rational singularity. Furthermore $Y_{\lambda}^C$ is birational to $X_{\lambda}^C$ through the projection $\pi$ (See for example, \cite{HIMN}). Therefore in $K$-theory and Chow ring of $\SG^k(E)$ the class $\kappa_{\lambda}^C$ coincides with the fundamental class of the degeneracy loci $X_{\lambda}^C$. Note that in a general oriented cohomology theory, the fundamental class of $X_{\lambda}^C$ is not defined since $X_{\lambda}^C$ may not be an l.c.i scheme.
\end{rem}
\subsection{Computing $\kappa_{\lambda}^C$}
\begin{defn}\label{dfCclass}
For each $m\in \ZZ$ and $-n\leq \ell \leq n$, define 
\[
\tscC_m^{(\ell)} := \tscS_{m}\big(U^\vee-(E/F^{\ell})^{\vee}\big).
\]
In $\Omega^*(\SG^k(E))$, we denote $\scC_m^{(\ell)} := \tscC_m^{(\ell)} (1_{\SG^k(E)})$.
\end{defn}
\begin{lem}\label{lempushC}
In $\Omega_*(Y_{j-1})$, we have
\begin{equation}\label{eqp1}
\sfp_{j*}\circ  \tilde{\tau}_j^s(1_{Y_j}) = \sum_{p=0}^{j-1}\sum_{q=0}^{\infty}(-1)^p\tilde{c}_p(D_{j-1}^{\vee} - D_{\gamma_j}) \circ{w}_{-q}(D_{j-1}^{\vee} - D_{\gamma_j})\circ\tscC_{\lambda_j+s-p+q}^{(\chi_j)}(1_{Y_{j-1}}).
\end{equation}
\end{lem}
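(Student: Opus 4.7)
The argument proceeds in four natural steps. First, I would factor $\sfp_{j*}=\pi'_{j*}\circ\iota_{j*}$ and combine the projection formula with Lemma \ref{Ylem2C} to rewrite
\[
\iota_{j*}\bigl(\tilde{\tau}_j^s(1_{Y_j})\bigr)=\tilde{c}_1\bigl((D_j/D_{j-1})^\vee\bigr)^s\circ\tilde{c}_{\lambda_j+n-k-j}\bigl((D_j/D_{j-1})^\vee\otimes(D_{\gamma_j}^\perp/F^{\chi_j})\bigr)(1_{P_{j-1}}).
\]

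Second, I would invoke Theorem \ref{thmlcipush} to evaluate $\pi'_{j*}$ of this class. Care is required: the flag bundle tower in Section \ref{ssec: Lag deg} uses the convention $\PP(V)=$\,(lines in $V$), with tautological sub-line-bundle $D_j/D_{j-1}$, whereas Theorem \ref{thmlcipush} is phrased for the dual projective bundle $\PP^*$, whose distinguished line bundle is the tautological quotient $\calQ$. Under the identification $\PP(V)=\PP^*(V^\vee)$---which sends the quotient line bundle of $\PP^*$ to the dual of the tautological sub of $\PP$---I would apply the theorem with the role of ``$E$'' played by $(U/D_{j-1})^\vee$ (of rank $e=n-k-j+1$), with $\calQ_Z=(D_j/D_{j-1})^\vee$, and with $F=(D_{\gamma_j}^\perp/F^{\chi_j})^\vee$. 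Since $s+f-e+1=s+\lambda_j$, this produces
\[
\sfp_{j*}\bigl(\tilde{\tau}_j^s(1_{Y_j})\bigr)=\tscS_{s+\lambda_j}\bigl((U/D_{j-1})^\vee-(D_{\gamma_j}^\perp/F^{\chi_j})^\vee\bigr)(1_{Y_{j-1}}).
\]

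Third, I would use the symplectic structure to simplify the virtual bundle. The short exact sequence $0\to D_{j-1}\to U\to U/D_{j-1}\to 0$ gives $[(U/D_{j-1})^\vee]=[U^\vee]-[D_{j-1}^\vee]$, while the symplectic form on $E$ yields the isomorphism $[E^\vee]=[E]$ together with the short exact sequence $0\to D_{\gamma_j}^\perp\to E\to D_{\gamma_j}^\vee\to 0$; combining the latter with $0\to F^{\chi_j}\to D_{\gamma_j}^\perp\to D_{\gamma_j}^\perp/F^{\chi_j}\to 0$ gives the $K$-theoretic identity $[(D_{\gamma_j}^\perp/F^{\chi_j})^\vee]=[(E/F^{\chi_j})^\vee]-[D_{\gamma_j}]$. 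Substituting these into the above reshapes the Segre class into
\[
\tscS_{s+\lambda_j}\Bigl(\bigl(U^\vee-(E/F^{\chi_j})^\vee\bigr)-\bigl(D_{j-1}^\vee-D_{\gamma_j}\bigr)\Bigr)(1_{Y_{j-1}}).
\]

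Finally, I would expand this relative Segre class via formula (\ref{eqRelSeg1}) of Definition \ref{relSeg} with $E':=U^\vee-(E/F^{\chi_j})^\vee$ and the virtual bundle $F':=D_{j-1}^\vee-D_{\gamma_j}$. Since $\tscS_{m-p+q}(E')=\tscC_{m-p+q}^{(\chi_j)}$ by Definition \ref{dfCclass}, this expansion reproduces exactly the right-hand side of the lemma. The step I expect to be the most delicate is the second one, namely correctly translating Theorem \ref{thmlcipush} from the $\PP^*$-with-quotient convention to the $\PP$-with-sub convention used in the flag bundle tower. Overlooking this duality would produce $U$ in place of $U^\vee$ in the final Segre class and break the identification with $\tscC^{(\chi_j)}$ from Definition \ref{dfCclass}.
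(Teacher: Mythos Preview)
Your proof is correct and follows essentially the same route as the paper: factor $\sfp_{j*}=\pi'_{j*}\circ\iota_{j*}$, apply Lemma \ref{Ylem2C} to replace $\iota_{j*}(1_{Y_j})$ by the top Chern class $\tilde{\alpha}_j$, invoke Theorem \ref{thmlcipush} to get $\tscS_{s+\lambda_j}\bigl((U/D_{j-1})^\vee-(D_{\gamma_j}^\perp/F^{\chi_j})^\vee\bigr)(1_{Y_{j-1}})$, use $D_{\gamma_j}^\perp=E-D_{\gamma_j}^\vee$ to rewrite the virtual bundle, and expand via (\ref{eqRelSeg1}). Your explicit treatment of the $\PP$ versus $\PP^*$ duality in the second step is more careful than the paper, which simply applies Theorem \ref{thmlcipush} without comment; otherwise the arguments are identical.
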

\begin{proof}
By Lemma \ref{Ylem2C}, we have
\begin{eqnarray*}
\sfp_{j*}\circ  \tilde{\tau}_j^s(1_{Y_j}) 
=\pi'_{j*}\circ\iota_{j*}\circ  \tilde{\tau}_j^s(1_{Y_j}) 
=\pi'_{j*}\circ  \tilde{\tau}_j^s\circ\iota_{j*}(1_{Y_j}) 
=\pi'_{j*}\circ  \tilde{\tau}_j^s\circ\tilde{\alpha}_j(1_{P_{j-1}}),
\end{eqnarray*}
where $\tilde{\alpha}_j:=\tilde{c}_{\lambda_j + n - k - j}((D_j/D_{j-1})^{\vee}\otimes (D_{\gamma_j}^{\perp}/F^{\chi_j}))$.  By Theorem \ref{thmlcipush}, we have
\begin{eqnarray*}
\pi_{j*}'\circ \tilde{\tau}_j^s \circ \tilde{\alpha}_j(1_{P_{j-1}}) 
&=& \tscS_{s+\lambda_j}\big((U/D_{j-1})^{\vee} - (D_{\gamma_j}^{\perp}/F^{\chi_j})^{\vee}\big)(1_{Y_{j-1}})\\
&=& \tscS_{s+\lambda_j}\big(U^{\vee}- (E/F^{\chi_j})^{\vee} - (D_{j-1}- D_{\gamma_j}^{\vee})^{\vee}\big)(1_{Y_{j-1}}),
\end{eqnarray*}
where we have used $D_{\gamma_j}^{\perp} = E - D_{\gamma_j}^{\vee}$. Now the claim follows from (\ref{eqRelSeg1}).
\end{proof}


Before we continue, we need to establish some notation. Let $R=\Omega^*(\Gr_d(E))$, viewed as a graded algebra over $\LL$, and let $t_1,\ldots,t_{r}$ be indeterminates of degree $1$. We use the multi-index notation $t^\bfs:=t_1^{s_1}\cdots t_{r}^{s_{r}}$ for $\bfs=(s_1,\dots,s_{r})\in \ZZ^{r}$. A formal Laurent series $f(t_1,\ldots,t_{r})=\sum_{\bfs\in\ZZ^{r}}a_{\bfs}t^{\bfs}$ is {\em homogeneous of degree} $m\in \ZZ$ if $a_{\bfs}$ is zero unless $a_{\bfs}\in R_{m-|\bfs|}$ with $|\bfs|=\sum_{i=1}^{r} s_i$. Let $\supp\, f = \{\bfs \in \ZZ^r \ |\ a_{\bfs}\not=0\}$.
For each $m \in \ZZ$, define $\calL^{R}_m$ to be the space of all formal Laurent series of homogeneous degree $m$ such that there exists $\sfn\in \ZZ^r$ for which $\sfn + \supp\, f$ is contained in the cone in $\ZZ^r$ defined by $s_1\geq0, \; s_1+s_2\geq 0, \;\cdots, \; s_1+\cdots + s_{r} \geq 0$. Then $\calL^{R}:=\bigoplus_{m\in \ZZ} \calL^{R}_m$ is a graded ring over $R$ with the obvious product. 
For each $i=1,\dots, r$, let $\calL^{R,i}$ be the $R$-subring of $\calL^R$ consisting of series that do not contain any negative powers of $ t_1,\dots, t_{i-1}$.  In particular, $\calL^{R,1}=\calL^{R}$. 
A series $f(t_1,\ldots,t_{r})$ is a {\em power series} if it doesn't contain any negative powers of $t_1,\dots,t_r$. Let $R[[t_1,\ldots,t_r]]_{m}$ denote the set of all power series in $t_1,\dots, t_r$ of degree $m\in \ZZ$. We set $R[[t_1,\ldots,t_r]]_{\gr}:=\bigoplus_{m\in \ZZ}R[[t_1,\ldots,t_r]]_{m}$.

 We are now able to define the substitution morphism $\phi_j$.
\begin{defn}
Define a graded $R$-module homomorphism $\phi_1: \calL^{R} \to \Omega_*(\SG^k(E))$ as 
\[
\phi_1^C( t_1^{s_1}\cdots  t_r^{s_r})= \tscC_{s_1}^{(\chi_1)} \circ\cdots \circ\tscC_{s_r}^{(\chi_r)}(1_{\SG^k(E)}).
\]  
Similarly, for each $j=2,\dots, d$, define a graded $R$-module homomorphism $\phi_{j}^C: \calL^{R,j} \to \Omega_*(Y_{j-1})$ by setting
\[
\phi_j^C( t_1^{s_1}\cdots  t_r^{s_r})= \tilde{\tau}_1^{s_1}\circ\cdots  \circ\tilde{\tau}_{j-1}^{s_{j-1}}\circ\tscC_{s_j}^{(\chi_j)} \circ\cdots \circ\tscC_{s_r}^{(\chi_r)}(1_{Y_{j-1}}).
\]
\end{defn}
\begin{rem}\label{remHomotoCoho}
By regarding $\Omega^*(\SG^k(E))=\Omega_{\dim \SG^k(E) - *}(\SG^k(E))$, we have
\[
\phi^C_1( t_1^{s_1}\cdots  t_r^{s_r})= \scC_{s_1}^{(\chi_1)} \cdots \scC_{s_r}^{(\chi_r)}.
\]  
\end{rem}
Using $\phi_j^C$, we can restate (\ref{eqp1}) as follows.
\begin{lem}\label{pphiC}
One has
\[
\sfp_{j*}\circ\tilde{\tau}_j^s(1_{Y_j})=\phi_j^C\left(t_j^{\lambda_j+s}\frac{\prod_{i=1}^{j-1}(1 - t_i/t_j)P(t_j, t_i)}{\prod_{i=1}^{\gamma_j}(1 - \bar t_i/t_j)P(t_j, \bar t_i)} \right).
\]
\end{lem}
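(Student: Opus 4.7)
The plan is to rewrite the double sum on the right-hand side of Lemma \ref{lempushC} as a Laurent series in $t_j$ whose coefficients are operators on $\Omega_*(Y_{j-1})$, and then recognize that series as the image under $\phi_j^C$ of the claimed expression.

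First, I would invoke the splitting principle. By the tower of projective bundles (\ref{C P tower}), $D_{j-1}$ carries a filtration $0 = D_0 \subset D_1 \subset \cdots \subset D_{j-1}$ whose line-bundle quotients have first Chern classes $\bar\tau_i = c_1(D_i/D_{i-1})$; consequently, $D_{j-1}^\vee$ has Chern roots $\tau_1, \ldots, \tau_{j-1}$ while $D_{\gamma_j}$ has Chern roots $\bar\tau_1, \ldots, \bar\tau_{\gamma_j}$. Combining these with Definition \ref{defwtilde1}, we obtain the identity
\begin{equation*}
\frac{\prod_{i=1}^{j-1}(1-\tau_i/t_j)\,P(t_j,\tau_i)}{\prod_{i=1}^{\gamma_j}(1-\bar\tau_i/t_j)\,P(t_j,\bar\tau_i)} = \tilde c(D_{j-1}^\vee - D_{\gamma_j};\,-1/t_j)\cdot\tilde w(D_{j-1}^\vee - D_{\gamma_j};\,1/t_j),
\end{equation*}
which, expanded as a Laurent series in $t_j$ with operator coefficients, equals $\sum_{p,q\geq 0}(-1)^p\,\tilde c_p(D_{j-1}^\vee - D_{\gamma_j})\,\tilde w_{-q}(D_{j-1}^\vee - D_{\gamma_j})\,t_j^{q-p}$.

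Next, I would multiply by $t_j^{\lambda_j+s}$ and apply $\phi_j^C$, which by definition substitutes $\tilde\tau_i$ for each $t_i$ with $i<j$ and sends each monomial $t_j^m$ to $\tscC_m^{(\chi_j)}$ acting on $1_{Y_{j-1}}$. The resulting expression is
\begin{equation*}
\sum_{p,q\geq 0}(-1)^p\,\tilde c_p(D_{j-1}^\vee - D_{\gamma_j})\,\tilde w_{-q}(D_{j-1}^\vee - D_{\gamma_j})\,\tscC^{(\chi_j)}_{\lambda_j+s-p+q}(1_{Y_{j-1}}),
\end{equation*}
which is precisely the right-hand side of (\ref{eqp1}).

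The main obstacle is bookkeeping: one must verify that the claimed series actually belongs to $\calL^{R,j}$ so that $\phi_j^C$ is defined on it, and that the formal substitution $u = 1/t_j$ converting the generating functions $\tilde c(\,\cdot\,;-u)$ and $\tilde w(\,\cdot\,;u)$ into series in $t_j^{-1}$ and $t_j$ is compatible with the operator composition defining $\phi_j^C$. Both $P(t_j,t_i)$ and $P(t_j,\bar t_i)$ are genuine power series in both arguments, $\bar t_i = \chi(t_i)$ starts with $-t_i$, and the factors $(1-t_i/t_j)$, $(1-\bar t_i/t_j)$ contribute no negative powers of $t_1,\dots,t_{j-1}$, so the required cone-positivity condition holds.
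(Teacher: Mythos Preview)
Your proposal is correct and follows essentially the same approach as the paper's proof. The paper introduces named generating functions $H_p^{\lambda}(t_1,\dots,t_{j-1})$ and $W_{-q}^{\lambda}(t_1,\dots,t_{j-1})$ in the formal variables, observes that their evaluation at $\tilde\tau_1,\dots,\tilde\tau_{j-1}$ yields $\tilde c_p(D_{j-1}^\vee - D_{\gamma_j})$ and $\tilde w_{-q}(D_{j-1}^\vee - D_{\gamma_j})$, and then applies $\phi_j^C$; you carry out the same identification via the Chern-root factorisations of $\tilde c(\,\cdot\,;-1/t_j)$ and $\tilde w(\,\cdot\,;1/t_j)$ directly. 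One small presentational point: your first displayed identity is written with the actual Chern classes $\tau_i$ on the left-hand side, but since $\phi_j^C$ acts on series in the formal variables $t_i$, it would be cleaner to state the identity with $t_i$ in place of $\tau_i$ (as the paper does) and then describe the substitution as part of applying $\phi_j^C$.
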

\begin{proof}
Consider the functions of $t_1,\dots, t_{j-1}$ defined by the following generating functions:
\begin{eqnarray*}
\sum_{p=0}^{\infty} H_p^{\lambda}(t_1,\dots,t_{j-1})u^p &:=& \frac{e(t_1,\dots, t_{j-1}; u)}{e(\bar t_1,\dots, \bar t_{\gamma_j}; u)} 
=\frac{\prod_{i=1}^{j-1}(1 + t_i u)}{\prod_{i=1}^{\gamma_j}(1 + \bar t_i u)}\\
\sum_{q = 0}^{\infty} W_{-q}^{\lambda}(t_1,\dots,t_{j-1}) u^{-q} &:=& \frac{w(t_1,\dots, t_{j-1}; u)}{w(\bar t_1,\dots, \bar t_{\gamma_j}; u)} 
=  \frac{\prod_{i =1}^{j-1}P(u^{-1}, t_i)}{\prod_{i =1}^{\gamma_j}P(u^{-1}, \bar t_i)}.
\end{eqnarray*}
Then we have
\[
H_p^{\lambda}(\tilde{\tau}_1,\dots,\tilde{\tau}_{j-1}) = \tilde{c}_p(D_{j-1}^{\vee} - D_{\gamma_j}), \ \ \ W_{-q}^{\lambda}(\tilde{\tau}_1,\dots,\tilde{\tau}_{j-1})=\tilde{w}_{-q}(D_{j-1}^{\vee} - D_{\gamma_j}).
\]
Thus,  by (\ref{eqp1}) and the definition of $\phi_j^C$, we have
\begin{eqnarray*}
\sfp_{j*}\circ\tilde{\tau}_j^s(1_{Y_j})
&=&\phi_j^C\left(\sum_{p=0}^{j-1}\sum_{q=0}^{\infty}(-1)^pH_p^{\lambda}(t_1,\dots,t_{j-1}) W_{-q}^{\lambda}(t_1,\dots,t_{j-1})  t_j^{\lambda_j+s-p+q}\right)\\
&=&\phi_j^C\left(t_j^{\lambda_j+s}\left(\sum_{p=0}^{j-1}(-1)^pH_p^{\lambda}(t_1,\dots,t_{j-1})t_j^{-p} \right)\left(\sum_{q=0}^{\infty}W_{-q}^{\lambda}(t_1,\dots,t_{j-1})t_j^q\right)  \right).
\end{eqnarray*}
The claim follows from the definitions of $H_p^{\lambda}$ and $W_{-q}^{\lambda}$ in terms of the generating functions.
\end{proof}
Finally, we are able to prove the main theorem in the case of symplectic Grassmann bundles.
\begin{thm}\label{detthmC}
For a strict partition $\lambda \in \SP^k(n)$, the associated symplectic Kempf--Laksov class $\kappa^C_{\lambda}$ is given by
\[
\kappa^C_{\lambda} = \sum_{\sfs=(s_1,\dots, s_r) \in\ZZ^r}f_{\sfs}^{\lambda}\scC_{s_1+\lambda_1}^{(\chi_1)} \cdots \scC_{s_r+\lambda_r}^{(\chi_r)}.
\]
where $f_{\sfs}^{\lambda}\in \LL$ are the coefficients of the Laurent series 
\begin{equation}\label{notationFofBC}
\frac{\prod_{1\leq i<j\leq r}(1-t_i/t_j)P(t_j,t_i)}{\prod_{(i,j)\in C(\lambda)}(1-\bar t_i/t_j)P(t_j, \bar t_i)}=\sum_{\sfs=(s_1,\dots, s_r) \in\ZZ^r}f_{\sfs}^{\lambda}\cdot t_1^{s_1}\cdots t_r^{s_r}
\end{equation}
as an element of $\calL^{\LL}$.
\end{thm}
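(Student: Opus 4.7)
The strategy is to compute $\kappa^C_\lambda = \sfp_*(1_{Y^C_\lambda}) = (\sfp_{1*}\circ\cdots\circ\sfp_{r*})(1_{Y_r})$ by iterated pushforward along the tower of projective bundles (\ref{C P tower}), applying Lemma \ref{pphiC} at each stage. Concretely, I would prove the following claim by descending induction on $j \in \{r, r-1, \dots, 1\}$: in $\Omega_*(Y_{j-1})$,
\begin{equation*}
(\sfp_{j*}\circ \sfp_{(j+1)*}\circ\cdots\circ\sfp_{r*})(1_{Y_r}) = \phi_j^C\!\left(\prod_{\ell=j}^r t_\ell^{\lambda_\ell}\cdot\prod_{\ell=j}^r \frac{\prod_{i=1}^{\ell-1}(1 - t_i/t_\ell)P(t_\ell, t_i)}{\prod_{i=1}^{\gamma_\ell}(1 - \bar t_i/t_\ell)P(t_\ell, \bar t_i)}\right).
\end{equation*}
The base case $j = r$ is exactly Lemma \ref{pphiC} with $s = 0$.

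For the inductive step, assume the formula for $j + 1$ and rewrite its right-hand side, using the definition of $\phi_{j+1}^C$, as a formal sum of classes of the shape $\tilde\tau_1^{a_1}\cdots\tilde\tau_j^{a_j}\,\tscC^{(\chi_{j+1})}_{b_{j+1}}\!\circ\cdots\circ\tscC^{(\chi_r)}_{b_r}(1_{Y_j})$. The Chern class operators $\tilde\tau_i$ for $i \leq j-1$ come from line bundles $D_i/D_{i-1}$ living at earlier levels of the tower, and each relative Segre class $\tscC^{(\chi_i)}$ is built from bundles $U$ and $E/F^{\chi_i}$ pulled back from $\SG^k(E)$; therefore, by the projection formula, all of these commute with $\sfp_{j*}$. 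The only operator that does not is $\tilde\tau_j^{a_j}$, whose pushforward is handled by Lemma \ref{pphiC} and produces precisely the extra factor $t_j^{\lambda_j + a_j}\prod_{i=1}^{j-1}(1-t_i/t_j)P(t_j,t_i)/\prod_{i=1}^{\gamma_j}(1-\bar t_i/t_j)P(t_j,\bar t_i)$, together with the switch from $\tilde\tau_j$ to $\tscC^{(\chi_j)}$ in the substitution rule. Reassembling the result as a single application of $\phi_j^C$ closes the induction.

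Specializing to $j = 1$ gives $\kappa^C_\lambda$ as $\phi_1^C$ applied to the displayed Laurent series with $j = 1$. Two simplifications complete the proof. First, the strict monotonicity $\chi_1 > \chi_2 > \cdots$ (immediate from the $k$-Grassmannian condition on $w$) implies that for each $\ell$ the set $\{i < \ell : \chi_i + \chi_\ell \geq 0\}$ is the initial segment $\{1,\dots,\gamma_\ell\}$, so
\[
\prod_{\ell=1}^r\prod_{i=1}^{\gamma_\ell}(1-\bar t_i/t_\ell)P(t_\ell,\bar t_i) = \prod_{(i,\ell)\in C(\lambda)}(1-\bar t_i/t_\ell)P(t_\ell,\bar t_i).
\]
Second, re-indexing $s_i \mapsto s_i + \lambda_i$ in the Laurent expansion absorbs the $\prod_\ell t_\ell^{\lambda_\ell}$ factor into the argument of $\scC^{(\chi_i)}$; translating from homology to cohomology via Remark \ref{remHomotoCoho} then yields the stated identity.

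The main obstacle is the careful bookkeeping of where each operator lives and the verification of the projection-formula commutations, together with checking at each inductive stage that the intermediate Laurent series lies in the appropriate subring $\calL^{R,j}$ so that $\phi_j^C$ is well defined on it. Once these formalities are in place the induction runs essentially mechanically, with Lemma \ref{pphiC} doing the heavy lifting.
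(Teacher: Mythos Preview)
Your proposal is correct and follows essentially the same approach as the paper: the paper's proof simply says that ``successive applications of Lemma \ref{pphiC}'' give $\kappa^C_\lambda = \phi_1^C$ applied to the indicated Laurent series, and then one reads off the coefficients. You have spelled out in detail the descending induction, the projection-formula commutations, and the identification $\prod_\ell\prod_{i\leq\gamma_\ell}=\prod_{C(\lambda)}$ via the strict monotonicity of $\chi$, all of which the paper leaves implicit or defers to \cite{HIMN}.
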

\begin{proof}
By Definition \ref{dfkappaC}, it follows from successive applications of Lemma \ref{pphiC} (\textit{cf.} \cite{HIMN}) that
\[
\kappa^C_{\lambda} = \phi_1^C\left(t_1^{\lambda_1}\cdots t_r^{\lambda_r}  \frac{\prod_{1\leq i<j\leq r}(1-t_i/t_j)P(t_j,t_i)}{\prod_{(i,j)\in C(\lambda)}(1-\bar t_i/t_j)P(t_j, \bar t_i)}  \right).
\]
Then, in view of the definition of the coefficients $f_{\sfs}$, it suffices to apply $\phi_1^C$ to obtain the claim.
\end{proof}
\section{Odd orthogonal degeneracy loci}

For this section we fix a nonnegative integer $k$.
\subsection{Degeneracy loci}\label{secKLB}
Consider the vector bundle $E$ of rank $2n+1$ over $X$ with a symmetric non-degenerate bilinear form $\lan \ ,\  \ran: E \otimes E \to \calO_X$ where $\calO_X$ is the trivial line bundle over $X$. Let $\xi: \OG^k(E) \to X$ be the Grassmann bundle consisting of pairs $(x,U_x)$ where $x\in X$ and $U_x$ is an $n-k$ dimensional isotropic subspace of $E_x$. Note that the bilinear form $\lan\ ,\ \ran$ on $E$ induces an isomorphism  $F^{\perp}/F \otimes F^{\perp}/F \cong \calO_X$ for any maximal isotropic subbundle $F$ of $E$ where $F^{\perp}$ is the orthogonal complement of $F$ with respect to $\lan\ ,\ \ran$. This implies that $c_1(F^{\perp}/F)=0$ in  $\Omega^*(X)\otimes_{\ZZ}\ZZ[1/2]$.

Fix a reference flag 
\[
0=F^n\subset F^{n-1} \subset \cdots \subset F^1 \subset F^0 \subset (F^0)^{\perp} \subset F^{-1} \subset \cdots \subset F^{-n}=E,
\]
such that $\rk F^i=n-i$ for $i\geq 0$ and $(F^{i})^{\perp}=F^{-i}$ for all $i \geq 1$.  For each $\lambda \in \SP^k(n)$ of length $r$, we define the associated degeneracy loci $X_{\lambda}^B$ in $\OG^k(E)$ is defined by
\[
X_{\lambda}^B = \Big\{(x,U_x)\in \OG^k(E) \ |\ \dim(U_x \cap F^{\chi_i})\geq i, \ i=1,\dots,r\Big\}.
\]
where $\chi$ is the characteristic index associated to $\lambda$.
\subsection{Quadric bundle}
The bundle $\OG^{n-1}(E)$ is also known as the quadric bundle and we denote it by $Q(E)$. In this section, we do not assume that $X$ is smooth as long as it is regularly embedded in a quasi-projective smooth variety. Let $S$ be the tautological line bundle of $Q(E)$. In this particular case the Schubert varieties of $Q(E)$ are indexed by a single integer $\lambda_1$ and can be explicitly described as follows:
\begin{equation}\label{Xi}
X^B_{\lambda_1}=\begin{cases}
Q(E)\cap\PP(F^{\lambda_1-n}) &\  (0\leq \lambda_1<n)\\
\PP(F^{\lambda_1-n}) & (n\leq \lambda_1< 2n).
\end{cases}
\end{equation}
It is worth noticing that $\lambda_1$ represents the codimension of $X^B_{\lambda_1}$ in $Q(E)$.

\begin{lem}
The fundamental class of the subvariety $X^B_{\lambda_1}$ in $\Omega_*(Q(E))$ for $\lambda_1<n$ is given by 
\begin{equation}\label{Xiformula1}
[X^B_{\lambda_1} \to Q(E)] = \tilde{c}_{\lambda_1}(S^{\vee}\otimes E/F^{\lambda_1-n})(1_{Q(E)}).
\end{equation}
Moreover the fundamental class of $X^B_{\lambda_1}$ in $\Omega_*(Q(E))$ for $\lambda_1\geq n$ satisfies the following identity
\begin{equation}\label{Xiformula2}
F^{(2)}\Big(\tilde{c}_1(S^{\vee}\otimes (F^0)^{\perp}/F^0)\Big)\big([X^B_{\lambda_1} \to Q(E)]\big) = \tilde{c}_{\lambda_1}\Big(S^{\vee}\otimes (E/(F^0)^{\perp}\oplus F^0/F^{\lambda_1-n})\Big)\big(1_{Q(E)}\big).
\end{equation}
where $F^{(2)}$ is a special case of the power series defined in (\ref{formalmult}).
\end{lem}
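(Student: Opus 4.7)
The plan is to prove both identities by realising $X^B_{\lambda_1}$, up to a possible scheme-theoretic thickening, as the zero scheme of a tautological section on $Q(E)$ and then applying Lemma~\ref{lemCKGB}; the two cases differ by whether the quadric cuts the relevant subbundle properly.

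For $0\leq\lambda_1<n$, I would first note that $F^{\lambda_1-n}=(F^{n-\lambda_1})^{\perp}$ has rank $2n+1-\lambda_1$, so $E/F^{\lambda_1-n}$ has rank $\lambda_1$. The composition $S\hookrightarrow E\twoheadrightarrow E/F^{\lambda_1-n}$ defines a section of $S^{\vee}\otimes E/F^{\lambda_1-n}$ on $Q(E)$ whose zero scheme is $X^B_{\lambda_1}=Q(E)\cap\PP(F^{\lambda_1-n})$. Since $F^{\lambda_1-n}$ is not isotropic when $\lambda_1<n$, the quadric cuts a proper divisor on $\PP(F^{\lambda_1-n})$; hence this zero scheme is regularly embedded in $Q(E)$ of codimension $\lambda_1$, and Lemma~\ref{lemCKGB} delivers (\ref{Xiformula1}) at once.

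For $\lambda_1\geq n$ the plan is to bootstrap from the base case $\lambda_1=n$. Write $j:\PP(F^0)\hookrightarrow Q(E)$ and set $K=(F^0)^{\perp}/F^0$. Since $\PP(F^{\lambda_1-n})\subset \PP(F^0)$ is the transverse zero scheme of the natural section of $S^{\vee}\otimes F^0/F^{\lambda_1-n}$, Lemma~\ref{lemCKGB} yields $[X^B_{\lambda_1}\to Q(E)]=j_{*}\bigl(\tilde{c}_{\lambda_1-n}(S^{\vee}\otimes F^0/F^{\lambda_1-n})(1_{\PP(F^0)})\bigr)$. Applying the operator $F^{(2)}(\tilde{c}_1(S^{\vee}\otimes K))$, the projection formula and the Whitney product decomposition for $E/(F^0)^{\perp}\oplus F^0/F^{\lambda_1-n}$ then reduces (\ref{Xiformula2}) to the base identity
\[
F^{(2)}\bigl(\tilde{c}_1(S^{\vee}\otimes K)\bigr)\bigl([\PP(F^0)\to Q(E)]\bigr)=\tilde{c}_n\bigl(S^{\vee}\otimes E/(F^0)^{\perp}\bigr)(1_{Q(E)}).
\]

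To establish this base case I would consider the section $\sigma:Q(E)\to S^{\vee}\otimes E/(F^0)^{\perp}$ induced by $S\hookrightarrow E\twoheadrightarrow E/(F^0)^{\perp}$, whose zero scheme is the scheme-theoretic intersection $Q(E)\cap\PP((F^0)^{\perp})$. The main obstacle is to verify that this intersection equals $2\PP(F^0)$ scheme-theoretically. The point is that the bilinear form on $(F^0)^{\perp}$ has $F^0$ as its radical and descends to the nondegenerate form on the line bundle $K$; consequently, in a local basis adapted to the splitting $(F^0)^{\perp}\cong F^0\oplus K$, the quadratic form restricts to $(F^0)^{\perp}$ as a unit multiple of the square of the coordinate dual to $K$, exhibiting $Q(E)\cap\PP((F^0)^{\perp})$ as the nonreduced divisor $2\PP(F^0)$ in $\PP((F^0)^{\perp})$. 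Once this is in place, $2\PP(F^0)$ has codimension $n$ in $Q(E)$, so Lemma~\ref{lemCKGB} identifies $\tilde{c}_n(S^{\vee}\otimes E/(F^0)^{\perp})(1_{Q(E)})$ with $[2\PP(F^0)\to Q(E)]$. Finally, Lemma~\ref{lem div} applied with $W=\PP((F^0)^{\perp})$, $D=\PP(F^0)$, $n=2$ and line bundle $L|_D=S^{\vee}\otimes K$ rewrites this class as $j_{*}\bigl(F^{(2)}(\tilde{c}_1(S^{\vee}\otimes K))(1_{\PP(F^0)})\bigr)$, which by the projection formula equals $F^{(2)}(\tilde{c}_1(S^{\vee}\otimes K))\bigl[\PP(F^0)\to Q(E)\bigr]$, completing the argument.
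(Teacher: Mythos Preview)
Your proposal is correct and follows essentially the same approach as the paper's proof: both arguments handle $\lambda_1<n$ directly via Lemma~\ref{lemCKGB}, establish the base case $\lambda_1=n$ by computing $[Q(E)\cap\PP((F^0)^\perp)\to Q(E)]$ in two ways (as $\tilde{c}_n(S^\vee\otimes E/(F^0)^\perp)(1_{Q(E)})$ via Lemma~\ref{lemCKGB} and as $F^{(2)}(\tilde{c}_1(S^\vee\otimes K))$ applied to $[\PP(F^0)\to Q(E)]$ via Lemma~\ref{lem div}), and then extend to $\lambda_1>n$ using the inclusion $\PP(F^{\lambda_1-n})\subset\PP(F^0)$. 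Your version is slightly more explicit in verifying that $Q(E)\cap\PP((F^0)^\perp)=2\PP(F^0)$ scheme-theoretically, which the paper simply asserts.
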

\begin{proof}
The formula (\ref{Xiformula1}) follows from Lemma \ref{lemCKGB}. For (\ref{Xiformula2}), first we show the case for $\lambda_1=n$, by computing the class $[X^B_n\to Q(E)]$ in $\Omega^*(Q(E))$ in two different ways. The variety $X^B_n$ is a divisor in $\PP((F^0)^{\perp})$, corresponding the line bundle $S^{\vee}\otimes (F^0)^{\perp}/F^0$. Moreover, the scheme theoretic intersection $Q(E)\cap \PP((F^0)^{\perp})$ defines the Weil divisor $2 X^B_n$ on $\PP((F^0)^{\perp})$ and in view of Lemma \ref{lem div} we have 
\[
1_{Q(E) \cap \PP((F^0)^{\perp})}=\iota_* \Big(F^{(2)}\big(\tilde{c} _1(S^{\vee}\otimes (F^0)^{\perp}/F^0)\big)(1_{X^B_n})\Big),
\]
where $\iota: X^B_n \to Q(E)\cap \PP((F^0)^{\perp})$ is the obvious inclusion. Thus, by pushing forward this identity to $Q(E)$, we obtain the following identity in $\Omega_*(Q(E))$:
\[
[Q(E) \cap \PP((F^0)^{\perp}) \to Q(E)]=F^{(2)}\Big(\tilde{c} _1(S^{\vee}\otimes F^0)^{\perp}/F^0\Big)\big([X^B_n \to Q(E)]\big).
\]
On the other hand, Lemma \ref{lemCKGB} implies that 
\[
[Q(E)\cap \PP((F^0)^{\perp}) \to Q(E)]=\tilde{c}_n\Big(S^{\vee}\otimes E/(F^0)^{\perp}\Big).
\]
This proves (\ref{Xiformula2}) for $\lambda_1=n$. 

If $\lambda_1>n$, by Lemma \ref{lemCKGB} we have $[X^B_{\lambda_1} \to X^B_n] = \tilde{c}_{i}(S^{\vee}\otimes F^0/F^i)$ in $\Omega_*(X^B_n)$. Thus by pushing forward the identity
\[
F^{(2)}\Big(\tilde{c} _1(S^{\vee}\otimes (F^0)^{\perp}/F^0\Big)\big([X^B_{\lambda_1} \to X^B_n]\big) =  F^{(2)}\Big(\tilde{c}_1(S^{\vee}\otimes F^0)^{\perp}/F^0\Big)\circ\tilde{c}_{i}(S^{\vee}\otimes F^0/F^i)(1_{X^B_n})
\]
to $\Omega^*(Q(E))$, and applying (\ref{Xiformula2}) for $\lambda_1=n$, we obtain
\begin{eqnarray*}
F^{(2)}\Big(\tilde{c} _1(S^{\vee}\otimes (F^0)^{\perp}/F^0\Big)\big([X^B_{\lambda_1} \to Q(E)]\big)&=& \tilde{c}_n\Big(S^{\vee}\otimes E/(F^0)^{\perp}\Big)\circ\tilde{c}_{\lambda_1-n}(S^{\vee}\otimes F^0/F^{\lambda_1-n})(1_{Q(E)})\\
&=& \tilde{c}_{\lambda_1}\Big(S^{\vee}\otimes (E/(F^0)^{\perp}\oplus F^0/F^i)\Big)(1_{Q(E)}).
\end{eqnarray*}
Thus (\ref{Xiformula2}) holds.
\end{proof}
As mentioned above, we have $\tilde{c}_1((F^0)^{\perp}/F^0)=0$ in $\Omega_*(Q(E))\otimes_{\ZZ} \ZZ[1/2]$ so that $\tilde{c}_1(S^{\vee}\otimes (F^0)^{\perp}/F^0) = \tilde{c}_1(S^{\vee})$. Therefore we have
\[
F^{(2)}\Big(\tilde{c}_1(S^{\vee}\otimes (F^0)^{\perp}/F^0)\Big) = F^{(2)}\Big(\tilde{c}_1(S^{\vee})\Big).
\]
Notice that, since it is homogeneous of degree $0$ with constant term $2$, the series $F^{(2)}(u)$ is invertible in $\LL\otimes_{\ZZ}\ZZ[1/2]$. Thus we have the following corollary.
\begin{cor}\label{corXi}
In $\Omega_*(Q(E))\otimes_{\ZZ}\ZZ[1/2]$, we have 
\[
[X^B_{\lambda_1}\to Q(E)] = \begin{cases}
\tilde{c}_{\lambda_1}(S^{\vee}\otimes E/F^{\lambda_1-n})(1_{Q(E)}) & (0\leq \lambda_1 < n)\\
\displaystyle\frac{1}{F^{(2)}\big(\tilde{c}_1(S^{\vee})\big)}\circ \tilde{c}_{\lambda_1}(S^{\vee}\otimes E/F^{\lambda_1-n})(1_{Q(E)}) & (n\leq  \lambda_1 < 2n).
\end{cases}
\]
\end{cor}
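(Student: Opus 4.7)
The first case ($0 \leq \lambda_1 < n$) is immediate from (\ref{Xiformula1}), so my plan focuses on $n \leq \lambda_1 < 2n$, where the aim is to manipulate (\ref{Xiformula2}) into the stated form by inverting an operator and then reidentifying the Chern class argument.

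The first step is to simplify the left-hand side of (\ref{Xiformula2}). The vanishing $\tilde{c}_1\big((F^0)^\perp/F^0\big)=0$ on $\Omega_*(Q(E))\otimes_{\ZZ}\ZZ[1/2]$, recalled at the start of Section \ref{secKLB}, together with the formal-group identity $F(u,0)=u$, collapses $\tilde{c}_1\big(S^\vee \otimes (F^0)^\perp/F^0\big)$ to $\tilde{c}_1(S^\vee)$. Hence the operator multiplying $[X^B_{\lambda_1}\to Q(E)]$ becomes $F^{(2)}\big(\tilde{c}_1(S^\vee)\big)$. Since $F^{(2)}(u)\in\Laz[[u]]$ has constant term $2$ by (\ref{formalmult}), it is a unit in $(\Laz\otimes\ZZ[1/2])[[u]]$; combined with the nilpotence of $\tilde{c}_1(S^\vee)$ on $\Omega_*(Q(E))\otimes_{\ZZ}\ZZ[1/2]$ coming from the projective bundle structure $Q(E)\to X$, this makes $F^{(2)}\big(\tilde{c}_1(S^\vee)\big)$ an invertible operator. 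Composing its inverse with both sides of (\ref{Xiformula2}) then isolates $[X^B_{\lambda_1}\to Q(E)]$ on the left.

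To put the Chern class on the right into the stated form, I use the three-step filtration $F^{\lambda_1-n}\subset F^0\subset (F^0)^\perp\subset E$, which yields the Grothendieck-group identity $[E/F^{\lambda_1-n}]=[E/(F^0)^\perp]+[(F^0)^\perp/F^0]+[F^0/F^{\lambda_1-n}]$. The Whitney formula then gives $\tilde{c}\big(S^\vee\otimes E/F^{\lambda_1-n};u\big)=\tilde{c}\big(S^\vee\otimes (E/(F^0)^\perp\oplus F^0/F^{\lambda_1-n});u\big)\cdot\big(1+\tilde{c}_1(S^\vee)u\big)$, where the last factor is obtained once more via $c_1((F^0)^\perp/F^0)=0$ after inverting $2$. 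Extracting the degree-$\lambda_1$ coefficient converts $\tilde{c}_{\lambda_1}\big(S^\vee\otimes (E/(F^0)^\perp\oplus F^0/F^{\lambda_1-n})\big)$ into $\tilde{c}_{\lambda_1}\big(S^\vee\otimes E/F^{\lambda_1-n}\big)$ modulo a correction of the form $\tilde{c}_1(S^\vee)\,\tilde{c}_{\lambda_1-1}(\cdots)$, which should be reabsorbed by the $F^{(2)}(\tilde{c}_1(S^\vee))^{-1}$ already applied in the previous step.

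I expect the main obstacle to be this last absorption. Working directly at the level of total Chern polynomials rather than of individual degree components should keep the bookkeeping tractable: one multiplies the whole formal series identity by $F^{(2)}(\tilde{c}_1(S^\vee))^{-1}$ and only at the very end extracts the degree-$\lambda_1$ coefficient, so that the apparent correction terms and the $F^{(2)}$ factor combine automatically to produce the stated form.
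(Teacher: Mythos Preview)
Your first three steps—direct appeal to (\ref{Xiformula1}) for $\lambda_1<n$, the reduction $\tilde{c}_1\big(S^\vee\otimes(F^0)^\perp/F^0\big)=\tilde{c}_1(S^\vee)$ via $c_1\big((F^0)^\perp/F^0\big)=0$, and the invertibility of $F^{(2)}$ over $\Laz\otimes_{\ZZ}\ZZ[1/2]$—are exactly the paper's argument; the paragraph preceding the corollary does nothing more and simply declares the result.

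You then go further and notice something the paper passes over in silence: for $\lambda_1\geq n$ the bundle on the right of (\ref{Xiformula2}) is $V=S^\vee\otimes\big(E/(F^0)^\perp\oplus F^0/F^{\lambda_1-n}\big)$ of rank $\lambda_1$, whereas the corollary is written with $W=S^\vee\otimes E/F^{\lambda_1-n}$ of rank $\lambda_1+1$. Your proposed fix, however, cannot work. The operator $F^{(2)}\big(\tilde{c}_1(S^\vee)\big)^{-1}$ is independent of the auxiliary variable $u$, so multiplying the identity $\tilde{c}(W;u)=\tilde{c}(V;u)\cdot(1+\tilde{c}_1(S^\vee)u)$ by it and extracting the $u^{\lambda_1}$ coefficient still leaves the surplus term $\tilde{c}_1(S^\vee)\,\tilde{c}_{\lambda_1-1}(V)(1_{Q(E)})$, and there is no mechanism forcing it to vanish. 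A Chow-ring check over $X=\Spec\bfk$ with $\lambda_1=n$ makes this concrete: there $F^{(2)}=2$ and all $F^i$ are trivial, so $\tfrac12\,\tilde{c}_n(V)=\tfrac12 h^n$ is the class of $X^B_n$, while $\tfrac12\,\tilde{c}_n(W)=\tfrac{n+1}{2}h^n$ is not. The discrepancy you detected is therefore genuine rather than a bookkeeping artifact: the second case of the corollary should carry $\tilde{c}_{\lambda_1}\big(S^\vee\otimes(E/(F^0)^\perp\oplus F^0/F^{\lambda_1-n})\big)$, which is what inverting (\ref{Xiformula2}) actually yields. The Segre-class form in the subsequent remark is unaffected by this, since $(F^0)^\perp/F^0$ has trivial Chern data after inverting $2$ and it is the \emph{top} Chern class $\tilde{c}_{\lambda_1}$ of the rank-$\lambda_1$ bundle that Remark~\ref{remLINE} converts into $\tscS_{\lambda_1}\big(S^\vee-(E/F^{\lambda_1-n})^\vee\big)$.
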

\begin{rem}
As mentioned in Remark \ref{remLINE}, we have
\[
[X^B_{\lambda_1} \to Q(E)] = \begin{cases}
\tscS_{\lambda_1}\big(S^\vee- (E/F^{\lambda_1-n})^{\vee}\big)(1_{Q(E)}) &\  (0\leq \lambda_1 < n)\\
 \displaystyle \frac{1}{F^{(2)}\big(c_1(S^\vee)\big)} \tscS_{\lambda_1}\big(S^\vee- (E/F^{\lambda_1-n})^{\vee}\big)(1_{Q(E)}) & (n\leq  \lambda_1 \leq 2n).
\end{cases}
\]
\end{rem}
\subsection{Resolution of singularities}\label{secBres}
Consider the $r$-step flag bundle $\pi: \Fl_r(U) \to \OG^k(E)$ as before. We let $D_1\subset \cdots \subset D_r$ be the tautological flag. Recall that $\Fl_r(U)$ can be constructed as the tower of projective bundles
\begin{equation}\label{BPtower}
\pi: \Fl_r(U)=\PP(U/D_{r-1}) \stackrel{\pi_r}{\to} \cdots \stackrel{\pi_{3}}{\to}\PP(U/D_1) \stackrel{\pi_2}{\to}  \PP(U) \stackrel{\pi_1}{\to} \OG^k(E)
\end{equation}
We regard $D_j/D_{j-1}$ as the tautological line bundle of $\PP(U/D_{j-1})$ where we let $D_0=0$. For each $j=1,\dots,r$, let $\tilde{\tau}_j:=\tilde{c}_1((D_j/D_{j-1})^{\vee})$ be the firstChern class operator of $(D_j/D_{j-1})^{\vee}$ on $\CK_*(\PP(U/D_{j-1}))$.
\begin{defn}
For each $j=1,\dots,r$, we define a subvariety $Y_j$ of $\PP(U/D_{j-1})$ by setting
\[
Y_j := \Big\{ (x,U_x, (D_1)_x, \dots, (D_j)_x) \in \PP(U/D_{j-1}) \ |\ (D_i)_x \subset F^{\chi_i}_x, \ i=1,\dots,{j}\Big\}.
\]
We set $Y_0:=\SG^k_r(U)$ and $Y^B_{\lambda}:=Y_r$. Let $P_{j-1}:=\pi_j^{-1}(Y_{j-1})$, $\pi_j': P_{j-1} \to Y_{j-1}$ the projection and $\iota_j: Y_j \to P_{j-1}$ the obvious inclusion. Let $\sfp_j:=\pi_j'\circ\iota_j$. We have the commutative diagram
\[
\xymatrix{
\PP(U/D_{j-1}) \ar[r]_{\pi_j} & \PP(U/D_{j-2})\\
P_{j-1} \ar[r]_{\pi_j'}\ar[u] & Y_{j-1}\ar[u]\\
Y_j\ar[u]_{\iota_j}\ar[ru]_{\sfp_j} &
}
\]
As in the symplectic case we set $\sfp:=\sfp_1\circ\cdots\circ\sfp_r: Y^B_{\lambda} \to \OG^k(E)$ and define
$$
\kappa_{\lambda}^B:= \sfp_*(1_{Y_{\lambda}^B}).
$$
\end{defn}
The following lemma is known from \cite{HIMN}, where the computation of the fundamental class of $Y_j$ in $P_{j-1}$ is done in connective $K$-theory $\CK_*$. However, the proof is valid in an arbitrary oriented Borel-Moore homology and in particular in $\Omega_*$.
\begin{lem}\label{Ylem2B}
For each $j=1,\dots, r$, the variety $Y_j$ is regularly embedded in $P_{j-1}$ and $P_{j-1}$ is regularly embedded in $\PP(U/D_{j-1})$, in particular they both belong to $\LCI$. Moreover we have
\[
\iota_{j*}(1_{Y_j})=\tilde{\alpha}_j(1_{P_{j-1}}),
\]
in $\CK_*(P_{j-1})$, where
\[
\tilde{\alpha}_j = \begin{cases}
\tilde{c}_{\lambda_j + n - k - j}\big((D_j/D_{j-1})^{\vee}\otimes (D_{\gamma_j}^{\perp}/F^{\chi_j})\big) & (-n\leq \chi_j<0)\\
\displaystyle \frac{1}{F^{(2)}\Big(\big(c_1(D_j/D_{j-1})^\vee\big)\Big)} \tilde{c}_{\lambda_j + n - k - j}\big((D_j/D_{j-1})^{\vee}\otimes (D_{\gamma_j}^{\perp}/F^{\chi_j})\big)
& \ (0\leq \chi_j< n).
\end{cases}
\]
\end{lem}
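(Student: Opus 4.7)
\textbf{Proof plan for Lemma \ref{Ylem2B}.} The statement has two parts which I would handle independently: (i) the regular-embedding claims $Y_j \hookrightarrow P_{j-1} \hookrightarrow \PP(U/D_{j-1})$, and (ii) the formula for $\iota_{j*}(1_{Y_j})$, split into two sub-cases according to the sign of $\chi_j$. The argument given in \cite{HIMN} for part (i) in $\CK_*$ is purely scheme-theoretic and proceeds by induction on $j$ via a local fibrewise calculation showing that $Y_j$ is cut out inside $P_{j-1}$ by the right number of equations, so that its codimension matches $\lambda_j + n - k - j$. Since none of this uses properties of $\CK_*$ beyond those of $\Omega_*$, I would transplant the argument verbatim; the same applies to the regular embedding of $P_{j-1}$ into $\PP(U/D_{j-1})$.

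For (ii) in the case $\chi_j < 0$, the flag element $F^{\chi_j}$ fails to be isotropic and the restricted condition $(D_j)_x \subset F^{\chi_j}_x$ on the fibres of $\PP(U/D_{j-1}) \to Y_{j-1}$ is purely linear. I would identify $Y_j$ with the zero locus of the canonical section of $(D_j/D_{j-1})^{\vee} \otimes (D_{\gamma_j}^{\perp}/F^{\chi_j})$ over $P_{j-1}$, a vector bundle of rank $\lambda_j + n - k - j$, and invoke Lemma \ref{lemCKGB} to obtain the first formula for $\tilde{\alpha}_j$. This is the verbatim analogue of Lemma \ref{Ylem2C} and requires no new ingredient.

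For (ii) in the case $\chi_j \geq 0$, $F^{\chi_j}$ is isotropic (being contained in $F^0$) and we are in a relative quadric-bundle situation generalising Corollary \ref{corXi}. The key point is that the zero scheme of the canonical section of $(D_j/D_{j-1})^{\vee} \otimes (D_{\gamma_j}^{\perp}/F^{\chi_j})$ over $P_{j-1}$ no longer coincides with $Y_j$ as a scheme: because of the isotropy constraint forced by $D_j \subset U$, the scheme-theoretic zero locus carries a non-reduced structure of multiplicity two along $Y_j$, exactly as the intersection $Q(E) \cap \PP((F^0)^{\perp}) = 2X^B_n$ appearing in Corollary \ref{corXi}. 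I would then apply Lemma \ref{lem div} to account for this doubling, obtaining an identity of the form
\[
\tilde{c}_{\lambda_j + n - k - j}\big((D_j/D_{j-1})^{\vee} \otimes (D_{\gamma_j}^{\perp}/F^{\chi_j})\big)(1_{P_{j-1}}) = F^{(2)}\big(\tilde{c}_1(L')\big)\big(\iota_{j*}(1_{Y_j})\big),
\]
where $L'$ is a line bundle built from $(D_j/D_{j-1})^{\vee}$ and the isotropic quotient $D_{\gamma_j}^{\perp}/D_{\gamma_j}$. Since the bilinear form identifies $D_{\gamma_j}^{\perp}/D_{\gamma_j}$ with its own dual, $2c_1(D_{\gamma_j}^{\perp}/D_{\gamma_j}) = 0$, hence $c_1(D_{\gamma_j}^{\perp}/D_{\gamma_j}) = 0$ in $\Omega_*(\cdot) \otimes_{\ZZ} \ZZ[1/2]$; combined with the formal-group-law expansion of $c_1$ of a tensor product, this collapses $\tilde{c}_1(L')$ to $\tilde{c}_1((D_j/D_{j-1})^{\vee})$. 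Invertibility of $F^{(2)}(u)$ in $\Laz \otimes_{\ZZ} \ZZ[1/2]$, as in the discussion preceding Corollary \ref{corXi}, then produces the second formula for $\tilde{\alpha}_j$.

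The main obstacle is the quadric case $\chi_j \geq 0$: one has to identify precisely the scheme structure on the naive Chern-class zero locus inside $P_{j-1}$ and verify rigorously that it doubles along $Y_j$ in the relative setting over $Y_{j-1}$. The absolute quadric case of Corollary \ref{corXi} relies on the simple geometric fact that a quadric meets a tangent linear subspace in a doubled linear section; the relative analogue requires tracking how this behaviour varies over $Y_{j-1}$ and ensuring Lemma \ref{lem div} can be applied along a relative divisor rather than an absolute one.
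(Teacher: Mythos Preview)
The paper does not actually prove this lemma: immediately before the statement it says the result ``is known from \cite{HIMN}'' and that ``the proof is valid in an arbitrary oriented Borel--Moore homology and in particular in $\Omega_*$'', and no further argument is given. Your plan is therefore strictly more detailed than what the paper does, and its overall shape --- transplant the scheme-theoretic regular-embedding argument from \cite{HIMN} for (i), use the zero-locus Lemma~\ref{lemCKGB} for (ii) when $\chi_j<0$ exactly as in Lemma~\ref{Ylem2C}, and for $\chi_j\geq 0$ run a relative version of the quadric computation of \S4.2 --- matches how \cite{HIMN} proceeds and how this paper treats the rank-one case in Corollary~\ref{corXi}.

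One imprecision worth flagging in your $\chi_j\geq 0$ sketch: you describe $L'$ as ``built from $(D_j/D_{j-1})^\vee$ and the isotropic quotient $D_{\gamma_j}^\perp/D_{\gamma_j}$'', but $D_{\gamma_j}^\perp/D_{\gamma_j}$ has rank $2(n-\gamma_j)+1$, not one, so it cannot be tensored in directly to form a line bundle. In the absolute quadric case the relevant rank-one piece is $(F^0)^\perp/F^0$, and the doubling occurs along a genuine divisor inside $\PP((F^0)^\perp)$ \emph{before} one multiplies by the remaining top Chern class (this is exactly the two-step structure of the proof of (\ref{Xiformula2}): first $\lambda_1=n$, then $\lambda_1>n$). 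The relative version over $Y_{j-1}$ follows the same pattern: pass to an intermediate subvariety where the residual condition is a divisor, apply Lemma~\ref{lem div} there, then cap with the remaining Chern class. Your final paragraph already identifies this as the main obstacle, so you are aware the displayed identity is schematic rather than literal; once the intermediate step is set up correctly, your endgame (killing the extra $c_1$ contribution after inverting $2$ and then inverting $F^{(2)}$) is exactly right.
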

\subsection{Computing $\kappa_{\lambda}^B$}\label{secPFthm}
\begin{defn}\label{dfBclass}
Let $-n\leq \ell <n$. For each $m\in \ZZ$, we define the operators $\scB_m^{(\ell)}$ for $\Omega_*(\OG^k(E))\otimes_{\ZZ}\ZZ[1/2]$ by means of the following generating function
\[
\sum_{m\in \ZZ}\tscB_m^{(\ell)} u^m = 
\begin{cases}
\tscS\big(U^\vee- (E/F^{\ell})^{\vee};u\big) & (-n\leq \ell <0)\\
\displaystyle\frac{1}{F^{(2)}(u^{-1})} \tscS\big(U^\vee- (E/F^{\ell})^{\vee};u\big) &\  (0\leq \ell < n).
\end{cases}
\]
If $F^{(2)}(u^{-1})^{-1} = \sum_{s\geq 0} f_s u^{-s}$ with $f_s\in \LL\otimes_{\ZZ}\ZZ[1/2]$, then for each $\ell\geq 0$, we have 
\[
\tscB_m^{(\ell)} = \sum_{s\geq 0}f_s \tscS_{m+s}\big(U^\vee- (E/F^{\ell})^{\vee}\big).
\]
\end{defn}
\begin{rem}
If $\lambda=(\lambda_1) \in \SP^k(n)$, we have $\kappa_{\lambda}^B = \scB_{\lambda_1}^{(\chi_1)}$.
\end{rem}
\begin{lem}\label{rth first B} 
For each $s\geq 0$, we have
\begin{eqnarray}\label{eqstageB}
\sfp_{j*}\circ\tilde{\tau}_j^s(1_{Y_j})&=& \sum_{p=0}^{\infty}\sum_{q=0}^{\infty}(-1)^p\tilde{c}_p(D_{j-1}^{\vee} - D_{\gamma_j}) \circ\tilde{w}_{-q}(D_{j-1}^{\vee} - D_{\gamma_j})\circ\tscB_{\lambda_j+s-p+q}^{(\chi_j)}(1_{Y_{j-1}}).
\end{eqnarray}
\end{lem}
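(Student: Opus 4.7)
The argument will closely mirror that of Lemma \ref{lempushC}, the only novelty being the bifurcation of $\tilde\alpha_j$ in Lemma \ref{Ylem2B} according to the sign of $\chi_j$. My plan is to handle the two cases separately and then observe that both conclusions can be packaged uniformly in terms of $\tscB_m^{(\chi_j)}$. The starting move is to write $\sfp_{j*} = \pi'_{j*} \circ \iota_{j*}$ and substitute $\iota_{j*}(1_{Y_j}) = \tilde\alpha_j(1_{P_{j-1}})$ from Lemma \ref{Ylem2B}.

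When $\chi_j<0$, the computation is essentially a copy of the symplectic one: Theorem \ref{thmlcipush} will transform $\pi'_{j*} \circ \tilde\tau_j^s \circ \tilde c_{\lambda_j + n - k - j}\big((D_j/D_{j-1})^\vee \otimes (D_{\gamma_j}^\perp/F^{\chi_j})\big)(1_{P_{j-1}})$ into $\tscS_{s+\lambda_j}\big((U/D_{j-1})^\vee - (D_{\gamma_j}^\perp/F^{\chi_j})^\vee\big)(1_{Y_{j-1}})$. The identity $D_{\gamma_j}^\perp = E - D_{\gamma_j}^\vee$ in $\Omega^*\otimes_{\ZZ}\ZZ[1/2]$, which follows from $c_1((F^0)^\perp/F^0) = 0$, allows one to rewrite the argument as $U^\vee - (E/F^{\chi_j})^\vee - (D_{j-1}^\vee - D_{\gamma_j})$; an application of (\ref{eqRelSeg1}) then closes this case, because $\tscB_m^{(\chi_j)} = \tscS_m\big(U^\vee - (E/F^{\chi_j})^\vee\big)$ for $\chi_j<0$.

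When $\chi_j\geq 0$, $\tilde\alpha_j$ carries an extra factor $F^{(2)}(\tilde\tau_j)^{-1}$. Since on the l.c.i. scheme $P_{j-1}$ all Chern class operators act by multiplication with classes in $\Omega^*(P_{j-1})$ and therefore commute, I would expand $F^{(2)}(\tilde\tau_j)^{-1}=\sum_{t\geq 0} f_t \tilde\tau_j^t$ with the $f_t$ of Definition \ref{dfBclass} and merge it with $\tilde\tau_j^s$. The pushforward then becomes a $t$-indexed sum of terms of exactly the shape treated in the previous case, but with $s$ replaced by $s+t$. After exchanging the order of the $t$-summation with the $(p,q)$-summation produced by (\ref{eqRelSeg1}) and invoking the defining identity $\sum_{t\geq 0} f_t\, \tscS_{m+t}\big(U^\vee-(E/F^{\chi_j})^\vee\big) = \tscB_m^{(\chi_j)}$, the desired formula drops out.

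The hard part will be the bookkeeping in the $\chi_j\geq 0$ case: one must justify the mutual commutativity of $\tilde\tau_j^s$, $F^{(2)}(\tilde\tau_j)^{-1}$ and $\tilde c_{\lambda_j + n - k - j}(\cdots)$ so that Theorem \ref{thmlcipush} can be applied term by term, and verify that the double series $\sum_t\sum_{p,q}$ rearranges correctly. A secondary point worth recording is that the identification $D_{\gamma_j}^\perp = E - D_{\gamma_j}^\vee$ is valid only after passage to $\Omega_*\otimes_{\ZZ}\ZZ[1/2]$, which is harmless since by Definition \ref{dfBclass} the target of the formula already lies in that localisation.
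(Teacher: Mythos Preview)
Your proposal is correct and follows essentially the same route as the paper: reduce via Lemma \ref{Ylem2B} to $\pi'_{j*}\circ\tilde\tau_j^s\circ\tilde\alpha_j(1_{P_{j-1}})$, apply Theorem \ref{thmlcipush} and the identity $D_{\gamma_j}^\perp=E-D_{\gamma_j}^\vee$, and in the case $\chi_j\geq 0$ expand $F^{(2)}(\tilde\tau_j)^{-1}$ before pushing forward term by term. One small remark: the identity $D_{\gamma_j}^\perp=E-D_{\gamma_j}^\vee$ already holds in the Grothendieck group via the isomorphism $E/D_{\gamma_j}^\perp\cong D_{\gamma_j}^\vee$ coming from the nondegenerate form, so it does not depend on $c_1((F^0)^\perp/F^0)=0$ or on inverting $2$.
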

\begin{proof}
By Lemma \ref{Ylem2B}, we have
\begin{eqnarray*}
\sfp_{j*}\circ  \tilde{\tau}_j^s(1_{Y_j}) 
=\pi'_{j*}\circ\iota_{j*}\circ  \tilde{\tau}_j^s(1_{Y_j}) 
=\pi'_{j*}\circ  \tilde{\tau}_j^s\circ\iota_{j*}(1_{Y_j}) 
=\pi'_{j*}\circ  \tilde{\tau}_j^s\circ\tilde{\alpha}_j(1_{P_{j-1}}).
\end{eqnarray*}
Suppose that $\chi_j<0$. By Theorem \ref{thmlcipush},  the right hand side equals 
\[
\tscS_{\lambda_j+s}\Big((U/D_{j-1} - D_{\gamma_j}^{\perp}/F^{\chi_j})^{\vee}\Big)(1_{Y_{j-1}})
=\tscS_{\lambda_j+s}\Big((U- E/F^{\chi_j} -D_{j-1} + D_{\gamma_j}^{\vee})^{\vee}\Big)(1_{Y_{j-1}})
\]
where $D_{\gamma_j}^{\perp} = E - D_{\gamma_j}^{\vee}$. Then the claim follows from (\ref{eqRelSeg1}). Similarly, if $0\leq \chi_j$, Theorem \ref{thmlcipush} implies that the right hand side equals 
\[
\sum_{s'=0}^{\infty}f_{s'}\tscS_{\lambda_j+s+s'}\Big((U/D_{j-1})^\vee - (D_{\gamma_j}^{\perp}/F^{\chi_j})^{\vee}\Big)(1_{Y_{j-1}})
\]
where we set $F^{(2)}(u^{-1})^{-1} = \sum_{s'\geq 0} f_{s'} u^{-s'}$ with $f_{s'}\in \LL\otimes_{\ZZ}[1/2]$ as above. Again, we use the identity $D_{\gamma_j}^{\perp} = E - D_{\gamma_j}^{\vee}$ and then the claim follows from (\ref{eqRelSeg1}).
\end{proof}
Set $R:=\Omega^*(\OG^k(E))\otimes_{\ZZ}\ZZ[1/2]$ and let $\calL^R$ be the ring of formal Laurent series with indeterminates $t_1,\dots, t_{r}$ defined in the previous section.
\begin{defn}
Define a graded $R$-module homomorphism $\phi_1^B: \calL^{R}\otimes_{\ZZ}\ZZ[1/2] \to \Omega_*(\OG^k(E))\otimes_{\ZZ}\ZZ[1/2]$ by 
\[
\phi_1^B(t_1^{s_1}\cdots  t_r^{s_r})= \tscS_{s_1}\Big(U^\vee -(E/F^{\chi_1})^\vee\Big) \circ\cdots\circ  \tscS_{s_r}\Big(U^\vee -(E/F^{\chi_r})^\vee\Big)(1_{\OG^k(E)}).
\]
Similarly, for each $j=2,\dots, d$, define a graded $R$-module homomorphism $\phi_{j}^B: \calL^{R,j}\otimes_{\ZZ}\ZZ[1/2] \to \Omega_*(Y_{j-1})\otimes_{\ZZ}\ZZ[1/2]$ by
\[
\phi_j^B( t_1^{s_1}\cdots  t_r^{s_r})=  \tilde{\tau}_1^{s_1}\circ\cdots \circ\tilde{\tau}_{j-1}^{s_j}\circ\tscS_{s_j}\Big(U^\vee -(E/F^{\chi_j})^\vee\Big)\circ \cdots  \circ\tscS_{s_r}\Big(U^\vee -(E/F^{\chi_r})^\vee\Big)(1_{Y_{j-1}}).
\]
Note that for each $i$ such that $j\leq i \leq r$ and $\chi_i\geq 0$, we have 
\begin{equation}\label{remP}
\phi_j^B\left(\frac{t_i^{m}}{F^{(2)}(t_i)}\right) =\tscB_m^{(\chi_i)}(1_{Y_{j-1}}), \ \ \ m\in \ZZ.
\end{equation}
\end{defn}
As with Lemma \ref{pphiC}, by making use of Lemma \ref{rth first B} we can prove the following lemma.
\begin{lem}\label{MainLemB}
We have
\[
\sfp_{j*}\circ  \tilde{\tau}_j^s(1_{Y_j})= \begin{cases}
\phi_j^B\left(  t_j^{\lambda_j+s}\dfrac{\prod_{i=1}^{j-1}(1 - t_i/t_j)P(t_j, t_i)}{\prod_{i=1}^{\gamma_j}(1 - \bar t_i/t_j)P(t_j, \bar t_i)} \right) & (\chi_j<0)\\
\phi_j^B\left(  \dfrac{t_j^{\lambda_j+s}}{F^{(2)}(t_j)}\dfrac{\prod_{i=1}^{j-1}(1 - t_i/t_j)P(t_j, t_i)}{\prod_{i=1}^{\gamma_j}(1 - \bar t_i/t_j)P(t_j, \bar t_i)}\right) & (0\leq \chi_j).
\end{cases}
\]
for all $s\geq 0$.
\end{lem}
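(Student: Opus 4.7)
The plan is to mimic the proof of Lemma~\ref{pphiC}, using Lemma~\ref{rth first B} as the starting expansion and invoking the auxiliary identity~(\ref{remP}) to absorb the extra factor $F^{(2)}(t_j)^{-1}$ that enters Definition~\ref{dfBclass} when $\chi_j\geq 0$.

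I would first introduce the same two auxiliary generating series used in the symplectic argument, namely
\[
\sum_{p\geq 0} H_p^{\lambda}(t_1,\dots,t_{j-1})\, u^p = \frac{\prod_{i=1}^{j-1}(1+t_i u)}{\prod_{i=1}^{\gamma_j}(1+\bar t_i u)},
\qquad
\sum_{q\geq 0} W_{-q}^{\lambda}(t_1,\dots,t_{j-1})\, u^{-q} = \frac{\prod_{i=1}^{j-1}P(u^{-1}, t_i)}{\prod_{i=1}^{\gamma_j}P(u^{-1}, \bar t_i)}.
\]
By the splitting principle, substituting $t_i\mapsto \tilde\tau_i$ reproduces the operators $\tilde c_p(D_{j-1}^\vee - D_{\gamma_j})$ and $\tilde w_{-q}(D_{j-1}^\vee - D_{\gamma_j})$ respectively, in exact analogy with the computation in the proof of Lemma~\ref{pphiC}.

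Next I would feed Lemma~\ref{rth first B} through these substitutions so that the right-hand side of~(\ref{eqstageB}) becomes $\phi_j^B$ applied to a double Laurent sum in $p,q$. The key translation step is then the following: when $\chi_j<0$ one has $\tscB_m^{(\chi_j)}(1_{Y_{j-1}}) = \phi_j^B(t_j^m)$ directly from Definition~\ref{dfBclass}, while when $\chi_j\geq 0$ the identity~(\ref{remP}) gives instead $\tscB_m^{(\chi_j)}(1_{Y_{j-1}}) = \phi_j^B\bigl(t_j^m/F^{(2)}(t_j)\bigr)$. Collecting the sums under a single $\phi_j^B$ and factoring out $t_j^{\lambda_j+s}$ reduces the problem to evaluating the inner sums $\sum_p (-1)^p H_p^{\lambda}\, t_j^{-p}$ and $\sum_q W_{-q}^{\lambda}\, t_j^q$, which are precisely the specialisations of the two generating series above at $u=-t_j^{-1}$ and $u^{-1}=t_j$.

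Their product therefore evaluates to $\prod_{i=1}^{j-1}(1-t_i/t_j)P(t_j,t_i)\big/\prod_{i=1}^{\gamma_j}(1-\bar t_i/t_j)P(t_j,\bar t_i)$, yielding the first line of the statement; in the $\chi_j\geq 0$ case the additional factor $F^{(2)}(t_j)^{-1}$ arising from~(\ref{remP}) produces the second line. The algebraic content is identical to that of Lemma~\ref{pphiC}, and the only genuinely new point is the bookkeeping of the quadric-bundle correction $F^{(2)}$, which is precisely what the identity~(\ref{remP}) was formulated to handle; I therefore expect no serious obstacle beyond this careful accounting.
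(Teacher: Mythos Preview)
Your proposal is correct and follows essentially the same approach as the paper, which simply says that the argument of Lemma~\ref{pphiC} goes through verbatim once Lemma~\ref{rth first B} is used in place of Lemma~\ref{lempushC}. You have spelled out exactly this, including the one additional bookkeeping step---invoking~(\ref{remP}) to absorb the $F^{(2)}(t_j)^{-1}$ factor when $\chi_j\geq 0$---which is the only point where the odd orthogonal case differs from the symplectic one.
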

A repeated application of Lemma \ref{MainLemB} to the definition of $\kappa_{\lambda}^B$, together with (\ref{remP}), allows us to obtain the main theorem for odd orthogonal Grassmannians.
\begin{thm}\label{MainThmB} 
We have
\[
\kappa_{\lambda}^B 
=\sum_{\sfs=(s_1,\dots, s_r) \in\ZZ^r}c_{\sfs}^{\lambda}\scB_{\lambda_1+s_1}^{(\chi_1)} \cdots \scB_{\lambda_r+s_r}^{(\chi_r)}.
\]
where the $f_{\sfs}^{\lambda}\in \LL$ are the coefficients of the Laurent series
\begin{equation}
\frac{\prod_{1\leq i<j\leq r}(1-t_i/t_j)P(t_j,t_i)}{\prod_{(i,j)\in C(\lambda)}(1-\bar t_i/t_j)P(t_j, \bar t_i)}=\sum_{\sfs=(s_1,\dots, s_r) \in\ZZ^r}c_{\sfs}^{\lambda}\cdot t_1^{s_1}\cdots t_r^{s_r}
\end{equation}
as an element of $\calL^{\LL}$.
\end{thm}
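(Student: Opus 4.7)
The plan is to mimic the proof of Theorem~\ref{detthmC} almost verbatim, iterating Lemma~\ref{MainLemB} along the tower~(\ref{BPtower}) and closing out with the translation formula~(\ref{remP}) that turns $\phi_1^B$-values into $\tscB$-operators. Since $\kappa_\lambda^B=\sfp_*(1_{Y_\lambda^B})=\sfp_{1*}\circ\cdots\circ\sfp_{r*}(1_{Y_r})$, the strategy is to peel off the pushforwards one at a time, starting from the innermost $\sfp_{r*}$, and at each stage record the result as $\phi_j^B$ applied to some Laurent series in $\calL^{R,j}\otimes_\ZZ\ZZ[1/2]$.

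First, apply Lemma~\ref{MainLemB} with $j=r$ and $s=0$ to express $\sfp_{r*}(1_{Y_r})$ as $\phi_r^B(G_r\cdot N_r/D_r)$, where
$$N_r/D_r:=\frac{\prod_{i=1}^{r-1}(1-t_i/t_r)P(t_r,t_i)}{\prod_{i=1}^{\gamma_r}(1-\bar t_i/t_r)P(t_r,\bar t_i)},$$
and $G_r$ equals $t_r^{\lambda_r}$ if $\chi_r<0$ and $t_r^{\lambda_r}/F^{(2)}(t_r)$ if $\chi_r\geq 0$. For the inductive step, suppose that after integrating out the last $r-j$ stages we have obtained
$$\sfp_{j*}\circ\cdots\circ\sfp_{r*}(1_{Y_r})=\phi_j^B\!\left(\prod_{\ell=j}^{r}G_\ell\cdot\prod_{j\leq i<\ell\leq r}\frac{(1-t_i/t_\ell)P(t_\ell,t_i)}{\text{denominator factors from }C(\lambda)\cap\{(i,\ell)\}}\right).$$
Because $\tilde\tau_{j-1}$ commutes with $\sfp_{j*},\dots,\sfp_{r*}$ (it acts on $\PP(U/D_{j-2})$, upstream of $Y_{j-1}$) and because the $\tscS_{s_j}(U^\vee-(E/F^{\chi_j})^\vee)$ factors pull back trivially along the proper maps below $Y_{j-1}$, the definition of $\phi_j^B$ allows us to rewrite the right-hand side as an expression of the form $\sum_{s}\tilde\tau_{j-1}^{\,\lambda_{j-1}+s}(\cdot)$ applied to $1_{Y_{j-1}}$. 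Applying Lemma~\ref{MainLemB} to $\sfp_{(j-1)*}\circ\tilde\tau_{j-1}^{\lambda_{j-1}+s}(1_{Y_{j-1}})$ term by term converts the expression into $\phi_{j-1}^B$ evaluated at the previous Laurent series multiplied by $G_{j-1}\cdot\prod_{i<j-1}(1-t_i/t_{j-1})P(t_{j-1},t_i)$ divided by the corresponding factors indexed by pairs $(i,j-1)\in C(\lambda)$. Iterating down to $j=1$ yields
$$\kappa_\lambda^B=\phi_1^B\!\left(\prod_{j=1}^{r}G_j\cdot\frac{\prod_{1\leq i<j\leq r}(1-t_i/t_j)P(t_j,t_i)}{\prod_{(i,j)\in C(\lambda)}(1-\bar t_i/t_j)P(t_j,\bar t_i)}\right).$$

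To finish, observe that $\prod_{j=1}^{r}G_j=\prod_{j=1}^{r}t_j^{\lambda_j}\cdot\prod_{j:\chi_j\geq 0}\frac{1}{F^{(2)}(t_j)}$. Expanding the remaining Laurent fraction as $\sum_{\sfs}c_\sfs^\lambda t_1^{s_1}\cdots t_r^{s_r}$ and applying $\phi_1^B$ term by term, the monomial $t_j^{\lambda_j+s_j}$ (respectively $t_j^{\lambda_j+s_j}/F^{(2)}(t_j)$) produces $\tscS_{\lambda_j+s_j}(U^\vee-(E/F^{\chi_j})^\vee)$ if $\chi_j<0$ (respectively the same Segre operator divided by $F^{(2)}(\tilde c_1(U^\vee))$ through the generating function identity of Definition~\ref{dfBclass}). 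By the very definition of $\tscB_m^{(\chi_j)}$ and by (\ref{remP}), both cases collapse to the single operator $\tscB_{\lambda_j+s_j}^{(\chi_j)}$, giving the claimed formula after applying the classes to $1_{\OG^k(E)}$ and invoking Remark~\ref{remHomotoCoho}.

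The main obstacle is bookkeeping rather than a substantive new idea: one must verify that the factors $1/F^{(2)}(t_j)$ attached in Lemma~\ref{MainLemB} exactly for those $j$ with $\chi_j\geq 0$ survive the iteration without mixing (they do because each $1/F^{(2)}(t_j)$ involves only the single variable $t_j$ and commutes past the $\phi$-substitution), and that the resulting single-variable expression produced by $\phi_1^B$ coincides with $\tscB_m^{(\chi_j)}$ as dictated by Definition~\ref{dfBclass}. Once this matching is checked, Theorem~\ref{MainThmB} follows.
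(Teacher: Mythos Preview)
Your proposal is correct and follows exactly the approach the paper indicates: iterate Lemma~\ref{MainLemB} along the tower (just as Lemma~\ref{pphiC} was iterated in the proof of Theorem~\ref{detthmC}) and then invoke (\ref{remP}) to identify the output of $\phi_1^B$ with the $\tscB$-classes. The paper's own proof is in fact terser than yours, consisting only of the sentence ``A repeated application of Lemma~\ref{MainLemB} to the definition of $\kappa_{\lambda}^B$, together with (\ref{remP}), allows us to obtain the main theorem''; your write-up simply makes explicit the bookkeeping of the $1/F^{(2)}(t_j)$ factors and the descent through the $\phi_j^B$.
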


\bibliography{references}{}
\bibliographystyle{acm}
\end{document}